\documentclass{amsart}
\usepackage{graphicx}% Required for inserting images
\usepackage{xcolor}
\usepackage{amssymb}
\usepackage{hyperref}

\theoremstyle{plain}
\newtheorem{theorem}{Theorem}[section]

\newtheorem{lemma}[theorem]{Lemma}
\newtheorem{corollary}[theorem]{Corollary}

\newtheorem*{claim*}{Claim}

\theoremstyle{definition}
\newtheorem*{definition*}{Definition}

\theoremstyle{remark}

\numberwithin{equation}{section}

\def \one {\mathsf{1}}

\def \sbs {\subseteq}

\def \D {\mathcal{D}}
\def \R {\mathbb{R}}

\def \E {\mathbb{E}}

\def \N {\mathbb{N}}
\def \P {\mathbb{P}}
\def \mcS {\mathcal{S}}

\def \L {\mathcal{L}}

\def \d {\mathrm{d}}

\DeclareMathOperator{\TC}{TC}

\DeclareMathOperator{\Per}{Per}
\DeclareMathOperator{\Vol}{Vol}

\DeclareMathOperator{\dist}{dist}
\DeclareMathOperator{\diam}{diam}
\DeclareMathOperator{\dVol}{dVol}
\newcommand{\tc}{{\rm TC}\hskip0.02cm}
\newcommand{\emd}{{\rm EMD}\hskip0.02cm}
\newcommand{\conv}{{\rm conv}\hskip0.02cm}

\title[$L_1$-distortion]{$L_1$-distortion of Earth Mover Distances and Transportation Cost Spaces on High Dimensional Grids}

\author[Chris Gartland]{Chris Gartland$^1$}
\address{Department of Mathematics and Statistics, University of North Carolina at Charlotte, University City Blvd, Charlotte, NC 28223, U.S.A.}

\author{Mikhail Ostrovskii}
\address{Department of Mathematics and Computer Science, St. John's University, 8000 Utopia Parkway, 
Queens, NY 11439, USA}

\author[Yuval Rabani]{Yuval Rabani}
\address{The Rachel and Selim Benin School of Computer Science and Engineering, The Hebrew University of Jerusalem, 9190416 Jerusalem, Israel}

\author[Robert Young]{Robert Young}
\address{Courant Institute School of Mathematics, Computing, and Data Science, New York University, 251 Mercer St., New York, NY 10012, USA}

\thanks{$^1$The first named author was supported by the National Science Foundation under Grant Number DMS-2546184}
	
\keywords{Distortion of a metric embedding, isoperimetric inequality, Kantorovich metric, Lipschitz free space,  random measure, Sobolev inequality, Wasserstein metric}

\subjclass[2020]{51F30 (30L05, 46B03, 49Q22, 60G57, 68R12, 68W25)}

\date{February 2026}

\begin{document}

\begin{abstract}
We prove that the distortion of any embedding into $L_1$ of the transportation cost space or earth mover distance over a $d$-dimensional grid $\{1,\dots m\}^d$ is $\Omega(\log N)$, where $N$ is the number of vertices and the implicit constant is universal (in particular, independent of dimension). This lower bound matches the universal upper bound $O(\log N)$ holding for any $N$-point metric space. Our proof relies on a new Sobolev inequality for real-valued functions on the grid, based on random measures supported on dyadic cubes.
\end{abstract}

\maketitle

\section{Introduction}

Optimal-transport based distances on spaces of measures on metric spaces have a rich history, dating back to the classical work of Monge and Kantorovich (see \cite{San15,Vil09} for historical information), and appear in many different contexts. For example, they appear in studies of (1) similarity measurement between probability measures, (2) Banach spaces of transportation problems with the norm corresponding to optimal transportation cost, and (3) free linear spaces generated by metric spaces and endowed with the norms induced by these metric spaces. This led to a large variety of terminology meaning the same or closely related notions, with such names as Arens-Eells space, earth mover distance, Gini distance, Kantorovich distance, Kantorovich-Rubinstein norm, Lipschitz-free Banach space, transportation cost norm, Wasserstein distance, and Wasserstein space. In this article, we use the terms {\it earth mover distance} ($\emd$) to refer to the metric on probability measures on a metric space $X$ and {\it transportation cost norm} to refer to the norm on the linear space of signed measures on $X$ with total measure 0 (see \S\ref{sec:results} for the precise definitions).

In computer science, $\emd$ has ample applications in comparison of complex data objects. For instance, it has been recognized to capture well the differences between images \cite{RTG98,RTG00,Cha02,IT03} and between text documents \cite{KSKW15,RCP16,YCCMS19}. As high-dimensional spaces are often used to describe documents and even two-dimensional color images, the study of $\emd$ on high-dimensional spaces, in particular, on high-dimensional grids, is a popular direction \cite{AFPVX17,AIK08,BCJW25, CCRW23,CJLW22,FL23,IT03}. Low-distortion embeddings into $L_1$ are one of the standard tools in this study~\cite{AIK08,Cha02,IT03}. They are of particular importance in the approximate nearest neighbor problem \cite{JWZ24}. 
A similar motivation arises in control theory, where often the state space is represented as a multi-dimensional grid, each coordinate reflecting the discretization of a monitored numerical parameter. $\emd$ is suggested in various contexts as a robust measure of the difference between uncertain states (e.g.~\cite{Yan17,LKKB17}), or states of swarms of controlled devices (e.g.~\cite{MHYZ23}), and low-distortion embedding into $L_1$ is similarly attractive to reduce computational problems to known solutions.
Due to these connections, it is essential to clarify the level of distortion needed to embed $\emd$ over a high-dimensional grid into $L_1$ (see \S\ref{sec:background} for the precise definitions). This work provides such clarification.

\subsection{New and prior results} \label{sec:results}
Let $m,d\in\N$. Let $G=(V,E)$ be the $d$-dimensional grid graph of side length  $m-1$; i.e., $G$ is the $d$-fold graph Cartesian product $$P_m\square P_m\square\cdots\square P_m,$$ where $P_m$ is the path graph on $m$ nodes. Let $d_G$ denote the shortest path metric on $V$. Denote $N = m^d = |V|$. We consider the Banach space $\tc(G)$ of signed measures on $V$ of total measure $0$, endowed with the $L_1$ optimal transport norm $\|\cdot\|_{\tc}$ (characterized by its unit ball $\conv\left\{\frac{\delta_x - \delta_y}{d_G(x,y)}\colon x,y\in V, x\ne y\right\}$). The following theorem is our main result.

\begin{theorem}\label{thm:mainthm}
If $d \geq 2$, then any embedding of $\tc(G)$ into $L_1$ has bi-Lipschitz distortion $\Omega(\log N) = \Omega(d\log m)$. 
\end{theorem}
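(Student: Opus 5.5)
The plan is to reduce the statement to a Sobolev-type inequality for functions on $V$, by duality against the norms that $L_1$ can produce. First I linearize. $\tc(G)$ is finite dimensional and $L_1$ is complemented in its bidual, so a standard differentiation/$w^*$-limit argument (Heinrich--Mankiewicz, as used by Naor--Schechtman) turns a bi-Lipschitz embedding of $\tc(G)$ into $L_1$ with distortion $D$ into a linear one with distortion at most $D$. A finite-dimensional subspace of $L_1$ is isometric to a space normed by $\|\mu\| = \int_\Omega |\mu(f_\omega)|\,\d P(\omega)$ for some family of functions $f_\omega\colon V\to\R$. After rescaling, we may assume $\|\mu\|_{\tc}\le \|\mu\|\le D\|\mu\|_{\tc}$ for all $\mu$.

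The plan is to play two families of test measures against each other. The first family is the edge measures $\delta_x-\delta_y$ for $xy\in E$, which have $\tc$-norm $1$. The upper bound applied to them gives
\begin{equation*}
\E_\omega \sum_{xy\in E}|f_\omega(x)-f_\omega(y)|\le D|E| .
\end{equation*}
The second family is a collection of \emph{random} signed measures $\nu_1,\dots,\nu_K$ with $K\asymp d\log m\asymp\log N$, each satisfying $\|\nu_k\|_{\tc}\gtrsim 1$ deterministically, after normalization. Concretely, I would build a random descending chain of boxes $Q_0\supset Q_1\supset\cdots\supset Q_K$. It starts from a uniformly random dyadic box, and each step halves a single coordinate direction, cycling through the directions; this gives $d\log_2 m$ steps rather than the $\log_2 m$ steps of ordinary dyadic cubes. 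I would then set $\nu_k = s_k^{-1}(u_{Q_{k}}-u_{Q_{k-1}})$, where $u_Q$ is the uniform probability measure on $Q$ and $s_k$ is the side length in the halved direction. The lower bound $\|\nu_k\|_{\tc}\gtrsim 1$ should follow from a single Lipschitz test function, namely the coordinate being halved.

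The core step is a dimension-free Sobolev inequality: for every $f\colon V\to\R$,
\begin{equation*}
\sum_{k=1}^K \E\,|\nu_k(f)| \;\le\; \frac{C}{|E|}\sum_{xy\in E}|f(x)-f(y)| ,
\end{equation*}
with $C$ universal. Given this, apply it to each $f_\omega$ and average over $\omega$. Combined with the lower bound $\|\nu_k\|\ge\|\nu_k\|_{\tc}\gtrsim1$, it yields $K\lesssim \sum_k\E\|\nu_k\| \le C D$, hence $D\gtrsim d\log m$. The heuristic behind the inequality is a telescoping/martingale estimate. The quantity $|\nu_k(f)|$ is controlled, by a one-dimensional Poincar\'e inequality along the halved direction, by the average of $|\partial_i f|$ over $Q_{k-1}$ in that direction $i$. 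Each edge is then charged about once per scale in its own direction, and there are $\log m$ such scales in each of the $d$ directions. The normalization $s_k^{-1}$ must be chosen so that these contributions sum to $O(1)$ times the average edge gradient. The randomization of the boxes is what makes each edge equally likely to be charged, so that the bound involves the uniform average over $E$.

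The step I expect to be the main obstacle is precisely this Sobolev inequality with a constant independent of $d$. A naive Poincar\'e bound on a box loses a factor of $d$, or a factor of the aspect ratio. Moreover, a naive count of how often an edge is charged could pick up $\log N$ rather than $O(1)$, which would cancel the gain entirely. I would first attempt the one-direction-at-a-time scheme above and verify the charging count carefully. If that fails, I would choose the randomization so that $\nu_k(f)$ is a martingale difference and use an $L_1$ square-function-free bound that exploits the fact that each increment involves only one coordinate.
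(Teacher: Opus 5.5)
There is a genuine gap. Your linearization step is fine; the paper uses the equivalent statement $c_1(\tc(X))=c_{1,\rm lin}(\tc(X))$ from Gartland--Ostrovskii. The problem is that the core Sobolev inequality is false for the measures you propose, and the failure is intrinsic to those measures, not an artifact of a careless charging count.

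\textbf{The counterexample.} Take $f(x)=x_i$. Whenever step $k$ halves direction $i$, the barycenters of $Q_k$ and $Q_{k-1}$ differ by $s_k/4$ in the $i$-th coordinate and agree in the other coordinates. Hence $|\nu_k(f)|=\frac14=\|\nu_k\|_{\tc}$, deterministically. Each direction must be halved about $\log_2 m$ times, whatever order or randomization you use. So $\sum_k\E|\nu_k(f)|\approx\frac14\log_2 m$. On the other side, only the $1/d$ fraction of edges parallel to $e_i$ contribute, so your right-hand side equals $C/d$. The inequality therefore fails by a factor $\asymp d\log m$, which is the entire gain.

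Your own charging heuristic, done honestly, gives exactly this loss. Each edge in direction $i$ is charged once at each of the $\log m$ scales in direction $i$. Summing $\mathrm{avg}\,|\partial_i f|$ over $i$ then costs $d$ times the edge average.

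\textbf{Why no adjustment rescues it.} Renormalizing cannot help, because $\|\nu_k\|_{\tc}$ is pinned and is witnessed by $x_i$. Martingale structure cannot help either: the example is deterministic, and you are summing absolute values. Conceptually, consider the map $\mu\mapsto(\mu(x_1),\dots,\mu(x_d))\in\ell_1^d$. It has norm $1$ on $\tc(G)$, since $\sum_i|x_i-y_i|=d_G(x,y)$. It is isometric on every $\nu_k$, and it has norm $1$ on edges. So any argument that uses only noncontraction on these test measures plus the edge bound cannot certify distortion beyond $O(1)$. Measures whose transport cost is witnessed by a linear function are exactly the ones $L_1$ handles cheaply.

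\textbf{The missing idea.} You need test measures whose cost is witnessed only by a rough, configuration-dependent function. The paper's construction at each dyadic scale $k$ is as follows.
\begin{itemize}
\item Put independent sparse signs $X_Q\in\{0,\pm1\}$, with $\P(X_Q\neq 0)=p=2^{-4d}$, on all cubes $Q\in\D_k$.
\item Set $\mu_k=\sum_Q p^{-1}2^kX_Q\Vol_Q$, and subtract its constant-density part to get $\nu_k$.
\end{itemize}

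\textbf{The lower bound on $\E\|\nu_k\|_{\tc}$.} At most $(5e)^d$ cubes of $\D_k$ lie within $\ell_1$-distance $\frac12 d2^{n-k}$ of a given cube, which is small compared with $p^{-1}=16^d$. So positive mass typically sits at $\ell_1$-distance $\gtrsim d2^{n-k}$ from all negative mass. Testing against $\dist(\cdot,\bigcup\{Q:X_Q=-1\})$ then gives $\E\|\nu_k\|_{\tc}\gtrsim d2^n$. The factor $d$ thus comes from the $\ell_1$-geometry at each of the $\sim\log m$ scales, not from $d\log m$ scales each of unit size.

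\textbf{The Sobolev inequality.} It is proved for indicators first.
\begin{itemize}
\item Independence gives $\E|\mu_k(A)|\le 2^k\Vol(A)\min\{1,(p2^{kd}\Vol(A))^{-1/2}\}$.
\item Combined with the Bollob\'as--Leader edge-isoperimetric inequality, this yields $\E|\mu_k(A)|\lesssim 2^{-|m(A)-k|/2}\Per(A)$, where $2^{-m(A)}\approx\Vol(A)^{1/d}$.
\item This bound is summable in $k$, and the constant-density correction is negligible.
\item A coarea argument then passes from indicators to general $f$.
\end{itemize}
For these measures a coordinate function sees almost nothing, because of the random cancellations.

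Finally, note that this probabilistic argument needs $d\ge3$. The paper takes $d=2$ from Gartland--Ostrovskii, and handles small $m$ via the known $\Omega(\max\{d,\log m\})$ bound.
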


We also consider the metric space $\emd(G)$ consisting of probability measures on $V$ equipped with distance $\emd(\mu,\nu) = \|\mu-\nu\|_{\tc}$. For any finite metric space $X$, the $L_1$-distortion of the metric space $\emd(X)$ coincides with the linear $L_1$-distortion of the Banach space $\tc(X)$ (see Theorem~\ref{T:lin&EMD} for the precise statement). This important reduction to linear maps was observed in \cite[Lemma~3.1]{NS07} for $2$-dimensional grids, and the generalization for an arbitrary finite metric space is presented in \cite{GO26}. Hence, the next corollary follows immediately from Theorems~\ref{thm:mainthm} and \ref{T:lin&EMD}.

\begin{corollary}\label{cor:maincor}
If $d \geq 2$, then any embedding of $\emd(G)$ into $L_1$ has bi-Lipschitz distortion $\Omega(\log N) = \Omega(d\log m)$. 
\end{corollary}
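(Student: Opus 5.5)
The plan is to deduce Corollary~\ref{cor:maincor} directly from Theorem~\ref{thm:mainthm} by means of the reduction Theorem~\ref{T:lin&EMD}. That reduction, observed in \cite[Lemma~3.1]{NS07} for $2$-dimensional grids and proved for arbitrary finite metric spaces in \cite{GO26}, states that for a finite metric space $X$ the $L_1$-distortion of the metric space $\emd(X)$ equals the linear $L_1$-distortion of the Banach space $\tc(X)$.

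We argue by contraposition. Suppose $f\colon \emd(G)\to L_1$ is a bi-Lipschitz embedding with distortion $D$. Theorem~\ref{T:lin&EMD}, applied to $X=(V,d_G)$, produces a \emph{linear} embedding $T\colon \tc(G)\to L_1$ whose distortion is at most $D$. Theorem~\ref{thm:mainthm} applies to every embedding of $\tc(G)$ into $L_1$, and in particular to linear ones. It therefore gives $D\ge \operatorname{dist}(T)\ge c\log N = c\,d\log m$ for a universal constant $c>0$, provided $d\ge 2$. Since $f$ was arbitrary, this proves the corollary.

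For the reader's orientation, here is why the reduction holds; this part is not needed beyond citing Theorem~\ref{T:lin&EMD}. Let $\mu_0$ be the uniform probability measure on $V$. Then $\mu_0+\varepsilon\sigma$ is a probability measure whenever $\sigma\in\tc(G)$ and $\varepsilon\|\sigma\|_{\tc}$ is small. Hence $\emd(G)$ contains an isometric copy of a small ball of $\tc(G)$, translated by $\mu_0$. Restricting $f$ to this ball and rescaling by $1/\varepsilon$ gives Lipschitz maps of a finite-dimensional normed space into $L_1$, all with distortion at most $D$. One then linearizes by passing to a limit as $\varepsilon\to 0$. For this one uses that ultrapowers of $L_1$ are again $L_1(\mu)$-spaces, and one takes a $w^*$-derivative or a smoothing/averaging argument (Heinrich--Mankiewicz type). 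This produces a linear map of distortion at most $D$.

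The only potential obstacle in the corollary is this linearization, because $L_1$ lacks the Radon--Nikod\'ym property, so one cannot differentiate $f$ directly. This difficulty is handled entirely inside Theorem~\ref{T:lin&EMD}, so the corollary itself amounts to combining that theorem with Theorem~\ref{thm:mainthm}.
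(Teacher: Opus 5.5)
Your proposal is correct and is the paper's argument: the corollary follows immediately from Theorem~\ref{thm:mainthm} combined with the equality $c_1(\emd(X))=c_1(\tc(X))$ supplied by Theorem~\ref{T:lin&EMD}. One small precision point: that theorem is a statement about infima, so strictly you get a linear map of distortion at most $D+\eta$ for every $\eta>0$; alternatively, you can skip linear maps altogether and use $c_1(\emd(G))=c_1(\tc(G))$ directly.
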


Recall that the {\it $L_1$-distortion} of a metric space $(X, d)$ is the quantity $c_1(X)$ defined to be the infimal $C \geq 1$ for which their exists a map $F: X \to L_1(0,1)$ with {\it bi-Lipschitz distortion $C$}, which in turn means that
\[\forall u, v\in X, \,\, d(u, v) \le \|F(u)- F(v)\|_{1}\le Cd(u, v).\]

We remark that when $d=1$, the path graph $G$ isometrically embeds into $\R$, and it is classical that $c_1(\emd(G)) \leq c_1(\emd(\R)) = 1$ (see, for example, \cite[Proposition~2.17]{San15}).

Our lower bound matches asymptotically the universal upper bound of $O(\log N)$ that applies for any $N$-point metric space, through stochastic embeddings into dominating tree metrics, due to Fakcharoenphol, Rao, and Talwar~\cite{FRT04}. It is an intriguing question if there are techniques for embedding $\tc(X)$ spaces into $L_1$ that are genuinely different from the ones induced by stochastically embedding $X$ into dominating tree metrics. In particular, the most important question is whether any technique can yield distortion of a lower order than the stochastic embedding distortion of $X$ into dominating tree metrics. Theorem~\ref{thm:mainthm} eliminates the high-dimensional grid as a counterexample candidate.

Our results build upon several prior works. Khot and Naor~\cite[Corollary~3.7]{KN06} established a lower bound of $\Omega(d)$ for the case of $m=2$ (the binary cube), and recently Gartland and Ostrovskii~\cite{GO26} gave a lower bound of $\Omega(\log m)$ for the case of $d = 2$ (the planar grid). As $G$ contains both a $d$-dimensional binary cube and an $m\times m$ planar grid, these results jointly imply a lower bound of
\begin{equation} \label{eq:lowerbound}
    c_1(\tc(G)) = \Omega(\max\{d,\log m\}),
\end{equation}
which Theorem~\ref{thm:mainthm} improves upon. Work on lower bounds of the $L_1$-distortion of the transportation cost space over the diamond graphs has also been conducted, first in \cite{BGS23}, and then with sharp bounds obtained in \cite{GO26}. The latter work also rules out the diamonds graphs as a potential counterexample to the question asked in the previous paragraph.

Our proof of Theorem~\ref{thm:mainthm} uses a new Sobolev-type inequality \eqref{eq:intro-sobolev} based on random measures, whose proof applies large deviation bounds and (edge) isoperimetric properties of the high-dimensional grid. Interestingly, our probabilistic proof of the Sobolev inequality \eqref{eq:intro-sobolev} requires dimension $d > 2$ and does not seem to be applicable to $d=2$. Conversely, the more geometric techniques of \cite{GO26} work well for $d=2$, but the dependence of the constants on dimension for these techniques becomes unclear as $d\to\infty$. Therefore, our proof of Theorem~\ref{thm:mainthm} is new for $d > 2$, and its proof for $d=2$ relies on \cite{GO26}. Theorem~\ref{thm:mainthm} in the case $d > 2$ and $m$ sufficiently large is Theorem~\ref{thm:L1distortion}, proved in \S\ref{sec:sobolev-distortion} (the other cases of Theorem~\ref{thm:mainthm} are covered by the lower bound \eqref{eq:lowerbound}).

\subsection{Notation and background} \label{sec:background}
Let $n,d \in \N$. Throughout the remainder of the article, we {\bf assume that} $\boldsymbol{d \geq 3}$. It will be convenient for us to consider the $d$-dimensional grid as an integer grid in $\mathbb{R}^d$ with the metric induced by the $\ell_1$-norm, and it will also be convenient to assume that sides of the grids are powers of $2$. Specifically, denote the set of integers $\{1,\dots 2^n\}$ by $[2^n]$ and its $d$-fold Cartesian product by $[2^n]^d$. We equip $[2^n]^d$ with edge set $$E([2^n]^d) := \{\{u,v\} \sbs V: u = v \pm e_i \text{ for some } i\in\{1,\dots d\}\},$$ where $\{e_i\}_{i=1}^d$ is the standard basis of $\R^d$. Equipped with the (unweighted) shortest path metric, the inclusion of $[2^n]^d$ into $\R^d$ is isometric when $\R^d$ is equipped with the $\ell_1$-norm.
 
Let $(X,d)$ be a finite metric space. Consider a real-valued measure $\mu$ on $X$ satisfying $\mu(X)=0$. A natural and important interpretation of such a real measure is as a {\it transportation problem}: one needs to transport a certain product from points where $\mu(v)>0$ to points where $\mu(v)<0$. 
 
One can easily see that a transportation problem $\mu$ can be represented as
\begin{equation}\label{E:TranspPlan}
    \mu=a_1(\delta_{x_1}-\delta_{y_1})+a_2(\delta_{x_2}-\delta_{y_2})+\dots+a_n(\delta_{x_n}-\delta_{y_n}),
\end{equation}
where $a_i\ge 0$, $x_i,y_i\in X$, and $\delta_u(x)$ for $u\in X$ is the unit measure at $u$. We call each such representation a {\it transportation plan} for $\mu$. The {\it cost} of the transportation plan \eqref{E:TranspPlan} is defined as the total cost of moving $a_i$ units of the product from $x_i$ to $y_i$, i.e., $\sum_{i=1}^n a_id(x_i,y_i)$.
 
The {\it transportation cost norm} $\|\mu\|_{\tc}$ on the linear space of all transportation problems $\mu$ on $X$ is the infimum of costs of transportation plans
satisfying \eqref{E:TranspPlan}. We get a finite-dimensional Banach space, which we denote $\tc(X)$. As mentioned in \S\ref{sec:results}, $\|\cdot\|_{\tc}$ is the unique norm on transportation problems whose unit ball is $\conv\left\{\frac{\delta_x - \delta_y}{d(x,y)}\colon x,y\in X, x\ne y\right\}$. Alternatively, Kantorovich duality \cite[Particular Case~5.16]{Vil09} gives the characterization $\|\mu\|_{\tc} = \sup\{|\int_X fd\mu|: f: X \to \R \text{ is 1-Lipschitz}\}$.

\subsection{Organization}
We will prove the lower bound on $c_1(\tc(G))$ by constructing random measures $\nu_k$ on $[2^n]^d$ for $k\in \{1,\dots,n\}$ and showing that\footnote{We use the following asymptotic notation. Given $a,b>0$, by writing
$a\lesssim b$ or $b\gtrsim a$ we mean that $a\le \omega b$ for some universal constant $\omega>0$.} $\E[\|\nu_k\|_{\tc}] \gtrsim d2^n$ for all $24\le k\le n$, but the $\nu_k$'s satisfy a Sobolev-type inequality of the form 
\begin{equation}\label{eq:intro-sobolev}
  \sum_{k=24}^n \E\left| \int f \d \nu_k\right| \lesssim \|f\|_{W^{1,1}}
\end{equation}
for all $f: [2^n]^d \to \R$. See \S\ref{sec:proof-sobolev} for the definition of the $(1,1)$-Sobolev norm $\|f\|_{W^{1,1}}$.

The measures $\nu_k$ are constructed in \S\ref{sec:construction}, and a lower bound on $\E[\|\nu_k\|_{\TC}]$ is established in \S\ref{sec:lower-TC}. In \S\ref{sec:proof-sobolev}, we will prove \eqref{eq:intro-sobolev}, and finally, in \S\ref{sec:sobolev-distortion}, we will prove Theorem~\ref{thm:L1distortion} (which, together with \eqref{eq:lowerbound}, implies Theorem~\ref{thm:mainthm}).

\section{Construction of Random Measures} \label{sec:construction}

In this section, we construct the random measures $\nu_k$ for $k\in \{1,\dots,n\}$. For $A \sbs [2^n]^d$, define
\begin{equation} \label{E:Vol}
    \Vol(A) := \frac{|A|}{|[2^n]^d|} = 2^{-nd}|A|.
\end{equation}

Fix $k \in \{1,\dots n\}$. Let $\D_k$ denote the collection of $2^{kd}$ dyadic cubes $Q$ in $[2^n]^d$ whose sides have cardinality $2^{n-k}$. More precisely, for $j \in \{1,\dots 2^{k}\}$, define
\begin{equation*}
    I_{j}^{(k)} := \{1+(j-1)2^{n-k},\dots, j2^{n-k}\}.
\end{equation*}
so that $\{I^{(k)}_j\}_{j=1}^{2^k}$ partitions $[2^n]$ into paths of cardinality $2^{n-k}$. Then we form the product partition of $[2^n]^d$:
\begin{equation*}
    \D_k := \{Q \sbs [2^n]^d: Q = \Pi_{i=1}^d I^{(k)}_{j_i} \text{ for some }(j_i)_{i=1}^d \in\{1,\dots 2^{k}\}^d\}.
\end{equation*}
Note that $\Vol(Q) = 2^{-kd}$ for every $Q \in \D_k$.

Recall that $d\in\N$ with $d\ge 3$. Set $p:= 2^{-4d}$, and let $\{X_Q\}_{Q\in\D_k}$ be a collection of i.i.d.\ random variables with $\P(X_Q = \pm 1) = p/2$ and $\P(X_Q=0)=1-p$. For each $Q \in \D_k$, define the measure $\Vol_Q$ on $[2^n]^d$ by $\Vol_Q(A) := \Vol(A \cap Q)$. Define a random signed measure $\mu_{k}$ on $[2^n]^d$ by
\begin{equation*}
    \mu_k := \sum_{Q\in\D_k}p^{-1}2^k X_Q \Vol_Q.
\end{equation*}
While the total mass of $\mu_k$ will typically be small, we need a random measure whose total mass is exactly 0 almost surely. The most pragmatic way to do this seems to be to subtract a measure that has constant density with respect to $\Vol$. Towards this end, we define the random signed measure $\nu_k$ on $[2^n]^d$ by
\begin{equation}\label{E:DefNu}
    \nu_k := \mu_k - \mu_k([2^n]^d)\Vol.
\end{equation}
Since $\nu_k$ has 0 total mass almost surely, it is a $\TC([2^n]^d)$-valued random variable. The next lemma bounds the expected total variation norm of the constant-density part of $\nu_k$, which will allow us to reduce all relevant estimates needed for $\nu_k$ to estimates for $\mu_k$.

\begin{lemma} \label{lem:mu(grid)}
Let $k \in \N$ with $k \geq 5$. Then $\E[|\mu_k([2^n]^d)|] < \frac{1}{d}(\sqrt{2})^{16-k}$.
\end{lemma}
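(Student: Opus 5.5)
Observe that $\mu_k([2^n]^d) = \sum_{Q\in\D_k} p^{-1}2^k X_Q \Vol(Q) = p^{-1}2^{k}2^{-kd}\sum_{Q\in\D_k} X_Q$. This is a scaled sum of $M:=2^{kd}$ i.i.d.\ mean-zero variables. The plan is to bound the first moment by the second moment via Cauchy--Schwarz:
\[
\E\bigl|\mu_k([2^n]^d)\bigr| \le \bigl(\E[\mu_k([2^n]^d)^2]\bigr)^{1/2}.
\]
Each $X_Q$ has $\E X_Q=0$ and $\E X_Q^2 = p$, and independence kills the cross terms, so
\[
\E[\mu_k([2^n]^d)^2] = p^{-2}2^{2k}2^{-2kd}\cdot 2^{kd} p = p^{-1}2^{2k-kd}.
\]
With $p=2^{-4d}$ this gives $\E|\mu_k([2^n]^d)| \le 2^{2d + k - kd/2} = 2^{k - d(k-4)/2}$.

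It remains to compare this with $\frac1d 2^{(16-k)/2}$, i.e.\ to verify
\[
k - \tfrac{d(k-4)}{2} + \log_2 d < 8 - \tfrac{k}{2}
\quad\Longleftrightarrow\quad
\tfrac{3k}{2} - \tfrac{d(k-4)}{2} + \log_2 d < 8 .
\]
This is the only point requiring care. Write the left side as $6 + \bigl(\tfrac32 - \tfrac d2\bigr)(k-4) + \log_2 d$. Since $d\ge3$ and $k\ge5$, the coefficient $\tfrac32-\tfrac d2\le 0$, so the expression is nonincreasing in $k$ and it suffices to take $k=5$. There it equals $7.5 - d/2 + \log_2 d$. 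This is maximized over integers $d\ge 3$ at $d=3$ (value $6+\log_2 3\approx 7.58$) and at $d=2\log_2 e$ nearby; checking $d=3,4,5$ gives $\approx7.58,\,7.5,\,7.32$, all below $8$. For larger $d$, $d/2-\log_2 d$ is increasing, so the bound only improves. Strict inequality holds throughout.

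I expect no real obstacle. The one thing to watch is that the second-moment bound must be strong enough in the worst case $d=3,k=5$; the computation above shows it is, with a little room to spare. If it had fallen short, the fallback would have been a sharper estimate of $\E|S|$ for a sparse sum: with expected number $pM$ of nonzero terms, one could bound $\E|S|$ by $\min(pM,\sqrt{pM})$.
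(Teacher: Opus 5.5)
Your proof is correct and follows the paper's argument: you bound the first moment by the second, compute $\E[\mu_k([2^n]^d)^2]=p^{-1}2^{2k-kd}=2^{4d+2k-kd}$, and then verify an elementary exponent inequality. The only difference is bookkeeping in the last step. The paper writes $4d-kd+2k=15-k-d+(3-d)(k-5)$ and uses $d(\sqrt2)^{-d-1}<1$, whereas you reduce to the worst case $k=5$, $d=3$ by monotonicity; the two are equivalent.
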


\begin{proof}
We begin by bounding the $L_1$-norm of $\mu_k([2^n]^d)$ by its $L_2$-norm, then computing the $L_2$-norm:
\begin{align*}
  \E[|\mu_k([2^n]^d)|]^2 &\leq \E[|\mu_k([2^n]^d)|^2] \\
    &= p^{-2}2^{2k}\sum_{Q\in\D_k}\E[X_Q^2]\Vol_Q([2^n]^d)^2 \\
    &= p^{-2}2^{2k}  p \sum_{Q\in\D_k}\Vol(Q)^2 \\
    &= p^{-1}2^{2k} 2^{kd}2^{-2kd} \\
    &= 2^{4d-kd+2k}.
\end{align*}
Since $d \ge 3$ and $k\ge 5$, we have
$$4d - kd + 2k = 15 - k - d + (3-d)(k-5) \le 15 - k - d,$$
so
$$\E[|\mu_k([2^n]^d)|] \le (\sqrt{2})^{15 - d - k} = \frac{1}{d} (\sqrt{2})^{16 - k} \cdot d (\sqrt{2})^{-d-1}.$$
One can check that
$$d (\sqrt{2})^{-d-1} = \sqrt{d^2 2^{-d-1}} < 1$$
for all $d\ge 3$, so $\E[|\mu_k([2^n]^d)|] \le \frac{1}{d}(\sqrt{2})^{16 - k}$, as desired.\end{proof}

\section{Lower Bound on Expected TC norm} \label{sec:lower-TC}
Next, we bound the expected transportation cost of $\nu_k$ from below by showing that there is a $c>0$ such that most of the measure of $\nu_k$ has to travel at least distance $cd 2^{n-k}$. The key bound is the following lemma.

\begin{lemma}\label{lem:cardinality}
  Let $Q_0\in \D_k$ and let $c > 0$. Then
  $$|\{Q\in \D_k : \dist(Q_0, Q) \le c d 2^{n-k}\}| \le (2e (c+2))^d$$
\end{lemma}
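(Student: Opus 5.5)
Work in the rescaled lattice of cube indices. Write $s = 2^{n-k}$ for the side length, and index each $Q\in\D_k$ by $j(Q)=(j_1,\dots,j_d)\in\{1,\dots,2^k\}^d$. In the $i$-th coordinate, the distance between points of $I^{(k)}_{j_i}$ and points of $I^{(k)}_{j'_i}$ is at least $(|j_i-j'_i|-1)_+\, s$, and the minimum is attained in each coordinate independently. Since the metric is the $\ell_1$-metric, this gives
$$\dist(Q_0,Q) = s\sum_{i=1}^d (|j_i-j'_i|-1)_+ .$$
So if $\dist(Q_0,Q)\le c d s$, then $t_i := (|j_i-j'_i|-1)_+$ are nonnegative integers with $\sum_i t_i \le cd$. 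For each coordinate, a given value of $t_i$ corresponds to at most $2$ choices of $j'_i$ when $t_i\ge 1$, and at most $3$ choices when $t_i=0$. Hence every value $t_i$ corresponds to at most $3\le 2^{t_i+2}$ choices, but this is too crude. A cleaner bound is $|j_i - j'_i| \le t_i+1$, so the cube $Q$ is determined by a vector $u = j'-j\in\mathbb{Z}^d$ with $\|u\|_1 \le \sum_i(t_i+1) \le cd + d = (c+1)d$.

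Next, count lattice points in the $\ell_1$-ball. The number of $u\in\mathbb{Z}^d$ with $\|u\|_1\le R$ is at most $2^d$ times the number of $v\in\mathbb{Z}_{\ge0}^d$ with $\sum v_i\le R$. Signs can be assigned to each coordinate in at most $2^d$ ways, which over-counts the zero coordinates but only upward. The number of such $v$ is $\binom{\lfloor R\rfloor+d}{d}$ by stars and bars. Using $\binom{a}{d}\le (ea/d)^d$ with $a = \lfloor R\rfloor + d \le (c+2)d$, we get
$$\binom{\lfloor R\rfloor+d}{d} \le \left(\frac{e(c+2)d}{d}\right)^d = (e(c+2))^d.$$
Multiplying by $2^d$ yields $(2e(c+2))^d$, as claimed.

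The only delicate step is the first: getting the exact formula (or even just the upper bound $|j_i-j'_i|\le t_i+1$) for the distance between two dyadic cubes, including adjacent cubes at distance $0$ that still differ in index by $1$. I would verify it coordinatewise. In one dimension, the gap between $I_j$ and $I_{j'}$ with $j<j'$ is $(j'-j-1)s+1$ in terms of the smallest point difference. This is even larger than needed, so the inequality $\dist \ge s\sum(|j_i-j_i'|-1)_+$ suffices, and that inequality is all the count uses. Everything after that is the standard binomial estimate.
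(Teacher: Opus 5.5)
Your proof is correct, but it takes a different route from the paper.

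The paper never computes distances between cubes. It thickens each $Q$ to a solid cube $\hat Q = Q-[0,1]^d$ of side $2^{n-k}$; these have disjoint interiors. Using only the triangle inequality and $\diam \hat Q = d2^{n-k}$, it places every relevant $\hat Q$ inside the $\ell_1$-ball of radius $(c+2)d2^{n-k}$ about a point of $Q_0$. It then compares Lebesgue measures via $\L(B_r)=2^d r^d/d!\le (2er/d)^d$.

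You instead work in index space. The product structure of dyadic cubes gives $\dist(Q_0,Q)\ge 2^{n-k}\sum_i (|j_i-j_i'|-1)_+$, hence $\|j(Q)-j(Q_0)\|_1\le (c+1)d$. You then count lattice points in that $\ell_1$-ball exactly, via sign patterns and stars and bars. This gives $2^d\binom{\lfloor (c+1)d\rfloor + d}{d}\le 2^d((c+2)d)^d/d!$, which is the discrete twin of the paper's volume bound. The two arguments lose the extra $2d$ in different places: yours in the ``$-1$'' per coordinate and the $+d$ in the binomial, the paper's in the two diameters.

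What each buys: the paper's packing argument needs only diameters and volumes, so it applies to any family of disjoint cells of comparable size. Yours is purely combinatorial and avoids Lebesgue measure and the $\ell_1$-ball volume formula, at the cost of using the explicit coordinate structure of the grid.

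Two cleanups for your write-up:
\begin{itemize}
\item The displayed identity $\dist(Q_0,Q)= s\sum_i(|j_i-j_i'|-1)_+$ is false, and cubes whose indices differ by $1$ in a coordinate are at distance $1$, not $0$, in that coordinate. The exact value is $\sum_{i:\, j_i\neq j_i'}\bigl((|j_i-j_i'|-1)s+1\bigr)$, as your last paragraph effectively observes. Since only the inequality ``$\ge$'' is used, the proof is unaffected, but the equality should not be stated.
\item Drop the abandoned ``too crude'' digression about $2$ or $3$ choices per coordinate.
\end{itemize}
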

\begin{proof}
We consider $[2^n]^d$ as a subset of $\ell_1^d$, i.e., $\R^d$ equipped with the $\ell_1$--norm. The graph metric on $[2^n]^d$ is then the restriction of the $\ell_1$ metric on $\ell_1^d$. Let $\mathcal{L}$ be the Lebesgue measure on $\ell_1^d$ and for $r>0$ and $x\in \ell_1^d$, let $B_r(x)\subset \ell_1^d$ be the ball of radius $r$ around $x$. Then $\L(B_r(x)) = \frac{r^d 2^d}{d!}$
(e.g., \cite[p.~3]{Bal97}), and since $d! \ge d^d e^{-d}$
(e.g., \cite[p.~17]{Spe14}),
  \begin{equation}\label{eq:ball-measure}
    \L(B_r(x)) \le \left(2e\frac{r}{d}\right)^d.
  \end{equation}

  For any $Q\in \D_k$, let $\hat{Q}\subset \ell^d_1$,
  $$\hat{Q} = \{ q - x : q\in Q, x \in [0,1]^d\},$$
  so that $\hat{Q}$ is a cube in $\ell_1^d$ of side length $2^{n-k}$ that contains $Q$, and if $Q_1\ne Q_2$, then $\hat{Q}_1$ and $\hat{Q}_2$ have disjoint interiors.

  Fix a point $x_0\in Q_0$. Since $\diam \hat{Q} = d2^{n-k}$ for all $Q \in \D_k$, if $\dist(Q_0, Q)\le c d 2^{n-k}$, then
  $$\hat{Q} \subset B_{(cd+2d)2^{n-k}}(x_0).$$
  
  Since the interiors of the $\hat{Q}$'s are all disjoint and have Lebesgue measure $2^{d(n-k)}$,
  \begin{align*}
    \big|\{Q\in \D_k : \dist(Q_0, Q) \le c d 2^{n-k}\}\big| & \le \big|\{Q\in \D_k : \hat{Q} \subset B_{(cd+2d)2^{n-k}}(x_0)\}\big| \\
    & \le 2^{-d(n-k)} \L(B_{(cd+2d)2^{n-k}}(x_0)).
  \end{align*}
  By \eqref{eq:ball-measure},
  $$|\{Q\in \D_k : \dist(Q_0, Q) \le cd 2^{n-k}\}| \le
  (2e (c+2))^d,$$
  as desired.
\end{proof}

\begin{theorem}\label{thm:TCnorm}
Let $k \in \{1,\dots n\}$ with $k \geq 24$. Then we have the lower bound $\E[\|\nu_k\|_{\TC}] \geq \frac{7}{48}d2^{n}$. 
\end{theorem}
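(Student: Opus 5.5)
We will bound $\|\nu_k\|_{\TC}$ from below via Kantorovich duality, using a random $1$-Lipschitz test function adapted to the realization of $\{X_Q\}$. The reduction to $\mu_k$ goes as follows. For any $1$-Lipschitz $f$ with $\|f\|_\infty \le \frac{d2^n}{2}$ (this is possible after centering, since $\diam[2^n]^d \le d2^n$), we have
\[
\int f\,\d\nu_k \ge \int f\,\d\mu_k - \tfrac{d2^n}{2}\,|\mu_k([2^n]^d)|.
\]
Lemma~\ref{lem:mu(grid)} then gives $\E|\mu_k([2^n]^d)| \le \frac1d 2^{(16-k)/2} \le \frac{1}{d}2^{-4}$ for $k\ge 24$, so this error term costs at most $2^{n-5}\le \frac{1}{32}d2^n$ in expectation. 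It therefore suffices to produce $f$ with $\E\int f\,\d\mu_k \ge (\frac{7}{48}+\frac1{32})d2^n$, or some comparable constant after tuning.

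Call $Q$ \emph{active} if $X_Q\ne0$, and fix a radius $r=cd2^{n-k}$ with $c$ to be chosen. The test function is
\[
f(x)=\min\Bigl(r,\ \dist\bigl(x,\textstyle\bigcup\{Q: X_Q=-1\}\bigr)\Bigr)
\]
(or its centered version). It is $1$-Lipschitz, vanishes on the negative cubes, and equals $r$ on every positive cube $Q$ that is \emph{isolated}, meaning that no other active cube lies within distance $r+d2^{n-k}$ of $Q$. Each positive cube contributes $p^{-1}2^k\,\Vol(Q)\cdot f|_Q$ to $\int f\,\d\mu_k$, and negative cubes contribute $0$. Hence
\[
\int f\,\d\mu_k \ge p^{-1}2^{k}2^{-kd}\, r\cdot\#\{\text{isolated positive } Q\}.
\]

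The probability estimate uses Lemma~\ref{lem:cardinality} with parameter $c+1$. A fixed cube has at most $(2e(c+3))^d$ neighbouring cubes within the relevant distance, so by a union bound
\[
\P(Q \text{ is isolated positive}) \ge \tfrac p2\bigl(1-p(2e(c+3))^d\bigr) = \tfrac p2\bigl(1-(e(c+3)/8)^d\bigr).
\]
Summing over the $2^{kd}$ cubes gives
\[
\E\int f\,\d\mu_k \ge \tfrac12\, 2^k\, c d 2^{n-k}\bigl(1-(e(c+3)/8)^d\bigr)= \tfrac{c}{2}\bigl(1-(e(c+3)/8)^d\bigr)\, d2^n.
\]

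The main obstacle is the constant. We need $e(c+3)<8$, i.e.\ $c<8/e-3\approx -0.06$, which is impossible. So the crude version of Lemma~\ref{lem:cardinality}, with $c+2$ plus the extra diameter, fails. I would fix this in two ways. First, use the sharper isolation criterion that only cubes within distance $r$ of $Q$ matter: a negative cube farther than $r$ away does not reduce $f$ on $Q$ below $r$, and positive cubes are harmless. This gives the count $(2e(c+2))^d$ with only the negative probability $p/2$, i.e.\ the bound $\frac12(e(c+2)/8)^d$. Second, rather than requiring $f\equiv r$ on $Q$, use $f(x)\ge r-\dist(x,\cdot)$ and average over $x\in Q$; the average $\ell_1$ distance from the boundary of a cube is only about $d2^{n-k}/4$, which effectively replaces the $+2$ by something smaller.

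Choosing for instance $c\approx 1/2$ and checking that $\frac c2\bigl(1-\frac12(e(c+2)/8)^d\bigr)$ minus the $\frac1{32}$ error exceeds $\frac7{48}$ for all $d\ge3$ would then finish the proof. The exact constant $\frac{7}{48}$ suggests the authors' bookkeeping: for example $\frac{c}{2}\cdot\frac{7}{12}$ with $c=\frac12$, or a similar split. I would tune $c$ and the averaging to match it.
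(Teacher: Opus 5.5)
Your first fix is the right argument, and with it your proof is essentially the paper's. The shared ingredients are:
\begin{itemize}
\item the test function is the distance to the union of the negative cubes;
\item only negative cubes affect it;
\item Lemma~\ref{lem:cardinality} with $c=\frac12$ plus a union bound over events of probability $\frac p2$ gives $1-\frac12(5e/16)^d\ge\frac23$;
\item the factor $2^{k-1}$ comes from $\P(X_Q=1)=\frac p2$;
\item together these give $\E\int f\,\d\mu_k\ge\frac16 d2^n$.
\end{itemize}
The paper does not truncate at $r$. Instead it conditions on the set $\mcS_-(X)$ of negative cubes to get the identity $\E\int f\,\d\mu_k=p'p^{-1}2^k\,\E\int f\,\dVol$ with $p'=\frac{p}{2-p}$. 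Your pointwise bound via isolated positive cubes, using that $X_Q$ is independent of the other cubes, reaches the same estimate slightly more directly. You should drop the original isolation criterion (the extra $d2^{n-k}$, counting all active cubes) and the vague averaging idea. The first fix alone suffices.

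The one step that fails as you wrote it is the final numerical check. With $c=\frac12$ and $d=3$ the main term is $\frac14\left(1-\frac12(5e/16)^3\right)d2^n\approx0.1734\,d2^n$. Subtracting your $\frac1{32}d2^n$ leaves $\approx0.1421\,d2^n$, which is less than $\frac7{48}d2^n\approx0.1458\,d2^n$. So "$c\approx\frac12$ works for all $d\ge3$" is false with your bookkeeping.

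The culprit is the error term, which you bounded as if $\|f\|_\infty$ were $\frac{d2^n}{2}$. We have $\int f\,\d\nu_k=\int f\,\d\mu_k-\mu_k([2^n]^d)\int f\,\dVol$, and your $f$ satisfies $0\le f\le r$. So the correction is at most $r\,\E|\mu_k([2^n]^d)|\le\frac{c}{16}2^{n-k}$, which is negligible. Even your own bound $2^{n-5}$ is at most $\frac1{96}d2^n$ because $d\ge3$, and that would already suffice. Hence $\E\|\nu_k\|_{\TC}\ge\frac16 d2^n-\frac1{32}2^{n-24}\ge\frac7{48}d2^n$, with no tuning needed.

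The paper's constant is simply $\frac16-\frac1{48}$, not the split you guessed. It comes from using the untruncated $f$ with $\|f\|_\infty\le d2^n$ together with $\E|\mu_k([2^n]^d)|\le\frac1{48}$.
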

\begin{proof}

  For each subset $\mcS \sbs \D_k$, we use the notation $\bigcup (\mcS) := \bigcup_{Q\in\mcS} Q$ and define the 1-Lipschitz function $f_\mcS: [2^n]^d \to [0,d2^{n}]$ by
  $$f_\mcS(z) = \min \left\{\dist\left(z,\bigcup (\mcS)\right), d 2^n\right\}.$$
  Note that $\diam [2^n]^d = d 2^n$, so $f_\mcS(z) = \dist(z,\bigcup(\mcS))$ for each $S\sbs\D_k$, unless $\mcS = \emptyset$, in which case we have $f_\mcS \equiv d2^n$.
  
We shall use that
\begin{equation} \label{eq:||f||infty}
     \forall \mcS\sbs\D_k, \:\:\: \|f_\mcS\|_\infty \leq d2^n,
\end{equation}
and also the following consequence of Lemma \ref{lem:mu(grid)} for $k\ge 24$:
\begin{equation} \label{eq:mu(grid)}
\E[|\mu_k([2^n]^d)|] \leq \frac{1}{48}. 
\end{equation}

Let $\mcS_-(X)$ denote the random subset $\{Q\in\D_k: X_Q = -1\}$. Kantorovich duality implies that $\|\nu_k\|_{\TC}$ is almost surely lower bounded by the absolute value of the integral of the random 1-Lipschitz function $f_{\mcS_-(X)}$ against $\nu_k$. Using this, we get
\begin{align*}
    \E\|\nu_k\|_{\TC}
    &\geq \E\Big|\int f_{\mcS_-(X)} \d\nu_k\Big| \\
    &\geq \E\Big|\int f_{\mcS_-(X)} \d\mu_k\Big| - \E\Big|\int f_{\mcS_-(X)} \mu_k([2^n]^d) \dVol\Big| \\
    &\overset{\eqref{eq:||f||infty}}{\geq} \E\Big|\int f_{\mcS_-(X)} \d\mu_k\Big| - d2^n\E|\mu_k([2^n]^d)| \\
    &\overset{\eqref{eq:mu(grid)}}{\geq} \E\Big|\int f_{\mcS_-(X)} \d\mu_k\Big| - \frac{1}{48}d2^n.
\end{align*}
We will show that
$$\E \int f_{\mcS_-(X)} \d\mu_k \ge \frac{1}{6}d2^n,$$
from which the desired inequality will then follow.
  
When $\mcS \sbs \D_k$, and $Y$ is a random variable and $E$ is an event with $Y$ and $\one_E$ both functions of the random variables $\{X_Q\}_{Q\in\D_k}$, we use the conditioning notation
$$\P_\mcS[E] = \frac{\P\left[\mcS_-(X)=\mcS \text{ and } E\right]}{\P\left[\mcS_-(X)=\mcS \right]} \text{\qquad} \E_\mcS[Y] = \frac{\E\left[Y\cdot\one_{\{\mcS_-(X)=\mcS\}}\right]}{\P\left[\mcS_-(X)=\mcS \right]}.$$
Then
\begin{equation}\label{eq:cond-formula}
    \E\int f_{\mcS_-(X)} \d\mu_k = \sum_{\mcS\sbs \D_k} \P\left[\mcS_-(X) = \mcS\right] \E_{\mcS} \int f_{\mcS_-(X)} \d\mu_k.
\end{equation}

We will bound the integral on the right by bounding it on every cube $Q\in \D_k$. Fix $\mcS \sbs \D_k$ and $Q \in \D_k$. Assume first that $Q \not\in \mcS$. If $\mcS_-(X) = \mcS$, then $X_Q \ne -1$, and thus since the $X_Q$'s are i.i.d., we have
$$\P_\mcS[X_Q=1] = \P[X_Q=1 | X_Q \ne -1] = \frac{p/2}{1-p/2} = \frac{p}{2-p}.$$
Define $p' := \frac{p}{2-p}$.

Then
\begin{align}\label{eq:cond-exp-Q}
    \E_{\mcS}\int_Q f_{\mcS_-(X)} \d\mu_k &= p^{-1}2^k\E_{\mcS}\int_Q f_{\mcS_-(X)} X_Q\dVol \notag \\ 
    &= p^{-1}2^k\int_Q f_\mcS \frac{\E\left[X_Q \one_{\{\mcS_-(X) = \mcS\}}\right]}{\P[\mcS_-(X) = \mcS]}\dVol \notag \\
    &= p^{-1}2^k\int_Q f_\mcS \P_\mcS[X_Q=1]\dVol \notag \\
    &= p'p^{-1}2^k \int_{Q} f_\mcS \dVol.
\end{align}

On the other hand, if we assume that $Q \in \mcS$, then both sides of \eqref{eq:cond-exp-Q} are $0$. Therefore, \eqref{eq:cond-exp-Q} holds for all $Q$, and so summing over $Q\in\D_k$ yields
$$\E_{\mcS}\int f_{\mcS_-(X)} \d\mu_k = p'p^{-1}2^k \int f_\mcS \dVol.$$
Using \eqref{eq:cond-formula} and the above, we get
\begin{align} \label{eq:E[f_S]}
    \E\int f_{\mcS_-(X)} \d\mu_k &= \sum_{\mcS\sbs \D_k} \P\left[\mcS_-(X) = \mcS\right] \E_{\mcS} \int f_{\mcS_-(X)} \d\mu_k \notag \\
    &= \sum_{\mcS\sbs \D_k} \P\left[\mcS_-(X) = \mcS\right] p'p^{-1}2^k \int f_\mcS \dVol \notag \\
    &= p'p^{-1}2^k \E \int f_{\mcS_-(X)} \dVol \notag \\
    &\geq 2^{k-1} \E \int f_{\mcS_-(X)} \dVol.
\end{align}
We thus consider $\E\int f_{\mcS_-(X)} \dVol$.

Let $Q\in \D_k$. Then Lemma~\ref{lem:cardinality} (with $c=\frac{1}{2}$) states that there are at most $(5e)^d$ cubes $Q'\in \D_k$ such that $d(Q,Q') \le \frac{1}{2} d2^{n-k}$. If none of these cubes are elements of $\mcS_-(X)$, then $\min_{z\in Q} f_{\mcS_-(X)}(z) \ge \frac{1}{2} d2^{n-k}$, and so a union bound yields
$$\P\left[\min_{z\in Q} f_{\mcS_-(X)}(z) \ge \frac{1}{2} d2^{n-k}\right] \ge 1 - \frac{p}{2}(5e)^d = 1 - \frac{1}{2}\left(\frac{5e}{16}\right)^d.$$
Since $d\ge 3$, $1 - \frac{1}{2}(\frac{5e}{16})^d \ge \frac{2}{3}$, and
$$\E\int_Q f_{\mcS_-(X)} \dVol \ge \Vol(Q) \cdot \frac{1}{2} d2^{n-k} \cdot \frac{2}{3} = \frac{1}{3} d2^{n-k} \Vol(Q).$$
Summing over all $Q\in \D_k$, we find
$$\E\int f_{\mcS_-(X)} \dVol \ge \sum_{Q\in \D_k} \frac{1}{3} d2^{n-k} \Vol(Q) = \frac{1}{3} d2^{n-k}.$$
Therefore, by \eqref{eq:E[f_S]} and the above inequality,
$$\E\int f_{\mcS_-(X)} \d\mu_k \geq 2^{k-1} \E\int f_{\mcS_-(X)} \dVol \ge \frac{1}{6} d2^n,$$
as desired.
\end{proof}

\section{Proving the Sobolev Inequality}\label{sec:proof-sobolev}

For $f: [2^n]^d \to \R$, we define the $(1,1)$-Sobolev (semi)norm of $f$ by
$$\|f\|_{W^{1,1}} := \frac{1}{|E([2^n]^d)|}\sum_{\{u,v\}\in E([2^n]^d)} 2^n|f(v)-f(u)|.$$
In this section, we will prove the following Sobolev inequality for functions $[2^n]^d \to \R$.

\begin{theorem}[Sobolev inequality for $\nu_k$] \label{thm:Sobolev}
There exists a constant $C<\infty$ (independent of $n,d$) such that, for any $f: [2^n]^d \to \R$, we have
\begin{equation*}
    \sum_{k=8}^n \E\left|\int f\d\nu_k\right| \leq C\|f\|_{W^{1,1}}.
\end{equation*}
\end{theorem}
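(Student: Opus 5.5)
The plan is to reduce to indicator functions by the coarea formula, and then prove a scale-by-scale estimate for a single set $A\sbs[2^n]^d$ that is summable in $k$.

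\emph{Reduction to sets.} Write $f = c + \int_{-\infty}^{\infty} (\one_{\{f>t\}} - \one_{\{t<0\}})\,\d t$. Since $\nu_k([2^n]^d)=0$, constants integrate to zero, so $\int f\,\d\nu_k = \int_\R \nu_k(A_t)\,\d t$ with $A_t=\{f>t\}$. Hence $\E|\int f\d\nu_k| \le \int_\R \E|\nu_k(A_t)|\,\d t$. On the other side, the coarea identity gives $\|f\|_{W^{1,1}} = \frac{2^n}{|E([2^n]^d)|}\int_\R |\partial A_t|\,\d t$, where $\partial A$ denotes the edge boundary and $|E([2^n]^d)| \asymp d2^{nd}$. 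It therefore suffices to show, for every $A$,
\begin{equation*}
\sum_{k=8}^n \E|\nu_k(A)| \lesssim \frac{2^n|\partial A|}{d\,2^{nd}}.
\end{equation*}
Because $\nu_k(A)=-\nu_k(A^c)$, we may assume $\Vol(A)\le 1/2$. The constant-density part of $\nu_k$ contributes $\E|\mu_k([2^n]^d)|\Vol(A)$. By Lemma~\ref{lem:mu(grid)}, summing over $k$ gives at most $\lesssim \Vol(A)/d$. The dimension-free edge-isoperimetric inequality on the grid (Bollobás--Leader: $|\partial A|\gtrsim 2^{-n}|A|$ when $|A|\le \frac12 2^{nd}$) then bounds this by the right-hand side. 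So we only need to control $\sum_k \E|\mu_k(A)|$.

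\emph{Decomposition at scale $k$.} For each $Q\in\D_k$, say $Q$ is \emph{full} if $\Vol(A\cap Q)\ge \frac12\Vol(Q)$. Let $A_k$ be the union of the full cubes. Then
\begin{equation*}
\mu_k(A)=\mu_k(A_k)+p^{-1}2^k\sum_{Q}X_Q\,\varepsilon_Q,
\end{equation*}
where $|\varepsilon_Q| = \min(\Vol(A\cap Q),\Vol(Q\setminus A))$.

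For the first term, independence and the $L_2$ bound (as in Lemma~\ref{lem:mu(grid)}) give
\begin{equation*}
\E|\mu_k(A_k)|\le p^{-1/2}2^{k}2^{-kd/2}\Vol(A_k)^{1/2}.
\end{equation*}
The factor $2^{k(1-d/2)}$ decays geometrically in $k$ once $k\ge 8$. I would then compare $\Vol(A_k)$ with $\Vol(A)\lesssim 2^n|\partial A|/2^{nd}$, again using the isoperimetric inequality. Where $\Vol(A)$ is too small for the square root to be harmless, I would use the trivial bound $\E|\mu_k(A_k)|\le 2^k\Vol(A_k)$ together with $\Vol(A_k)\le 2\Vol(A)$ at the coarsest scales where $A_k$ is nonempty.

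For the second term, relative isoperimetry inside a cube of side $2^{n-k}$ (again dimension-free when edges are counted with normalization $1/d$) gives
\begin{equation*}
|\varepsilon_Q|\lesssim \frac{2^{n-k}}{d}\,2^{-nd}\,|\partial A\cap E(Q)|.
\end{equation*}

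\emph{Main obstacle.} The crude bound $\E|X_Q|=p$ turns the error term into $\lesssim 2^n|\partial A|/(d2^{nd})$ at \emph{every} scale, and summing over $k$ loses a factor $n$. This is exactly the logarithm we must avoid, so this is where the real work lies, using the randomness and the large-deviation input. I would try the hybrid bound
\begin{equation*}
\E\Bigl|\sum_Q X_Q\varepsilon_Q\Bigr|\le \min\Bigl(p\sum_Q|\varepsilon_Q|,\ \sqrt{p}\,\bigl(\textstyle\sum_Q\varepsilon_Q^2\bigr)^{1/2}\Bigr),
\end{equation*}
and classify cubes at scale $k$ as \emph{heavy} if $\partial A\cap E(Q)$ is a large fraction of the maximal possible boundary in $Q$, and \emph{light} otherwise.

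For light cubes, the relative isoperimetric inequality is far from tight. Because $p=2^{-4d}$ is tiny, a Chernoff-type bound should make the $L_2$ term beat the $L_1$ term by a factor decaying in the scale gap. For heavy cubes, the boundary is concentrated, and a given boundary edge can lie in heavy cubes at only $O(1)$ consecutive scales, since heaviness at scale $k$ forces lightness of the parent cube at much coarser scales. The argument should charge each edge of $\partial A$ a geometrically decaying amount over the scales $k$, giving a total of $O(1)$ per edge. This double-counting/charging step is the one I expect to be hardest, and it is where $d\ge3$ and $k\ge 8$ should be used.
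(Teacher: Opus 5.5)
Your coarea reduction to sets and your treatment of the constant-density part $\mu_k([2^n]^d)\Vol$ are fine and match the paper. \textbf{There is a genuine gap in the core estimate} $\sum_{k\ge 8}\E|\mu_k(A)|\lesssim \Per(A)$. After splitting off the full cubes, you never bound the error term $p^{-1}2^k\sum_Q X_Q\varepsilon_Q$ summed over $k$. The heavy/light classification, the Chernoff step and the charging of boundary edges are only stated as expectations, so the argument is incomplete exactly where you locate the difficulty. Some of the supporting intuition is also off:
\begin{itemize}
\item The second-moment bound improves on the first-moment bound only by a factor $(pM)^{-1/2}$, where $M$ is the effective number of cubes involved. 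So the tininess of $p=2^{-4d}$ hurts rather than helps.
\item Your relative isoperimetric inequality with the factor $1/d$ is false for sets filling a constant fraction of $Q$. Half of $Q$ cut by a coordinate hyperplane gives $|\varepsilon_Q|=\frac12 2^{n-k}2^{-nd}|\partial A\cap E(Q)|$. Hence your crude per-scale bound is really of order $d\Per(A)$, not $\Per(A)$.
\item For the full-cube part you propose to compare with $\Vol(A)\lesssim 2^n|\partial A|/2^{nd}$, i.e.\ $\Vol(A)\lesssim d\Per(A)$. This linear isoperimetric bound is too weak: at $k\approx m(A)$ it loses a factor of order $d2^{m(A)}$, with $m(A)$ as defined below.
\end{itemize}

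\textbf{The missing idea is that no localization or charging is needed.} The logarithmic loss you are fighting is an artifact of measuring the error through the local boundary $|\partial A\cap E(Q)|$ at every scale. The paper bounds $\E|\mu_k(A)|$ using only $\Vol(A)$, in two ways:
\begin{itemize}
\item trivially, $\E|\mu_k(A)|\le 2^k\Vol(A)$;
\item since $\Vol_Q(A)\le 2^{-kd}$, by the second moment, $\E|\mu_k(A)|\le\bigl(p^{-1}2^{2k}\sum_Q\Vol_Q(A)^2\bigr)^{1/2}\le\bigl(p^{-1}2^{2k}2^{-kd}\Vol(A)\bigr)^{1/2}$.
\end{itemize}
Your own hybrid bound, with $\sum_Q|\varepsilon_Q|\le\Vol(A)$ and $\sum_Q\varepsilon_Q^2\le 2^{-kd}\Vol(A)$, already gives the same two bounds, so the full-cube decomposition is superfluous.

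The paper then applies global isoperimetry once, at the scale $m=m(A)$ defined by $2^{-m}<\Vol(A)^{1/d}\le 2^{-m+1}$:
\begin{itemize}
\item For $8\le k\le m+5$, one has $m\ge 3$, hence $\Vol(A)\le 2^{-2d}$. The dimension-free power inequality $\Vol(A)^{1-1/d}\le 2\Per(A)$ (from Bollob\'as--Leader's $|\partial A|\ge d|A|^{1-1/d}$ for such small sets) gives $\E|\mu_k(A)|\lesssim 2^{k-m}\Per(A)$.
\item For $k\ge m+6$, use the weaker $\Vol(A)^{1-1/d}\le 2d\Per(A)$ together with $p\,2^{kd}\Vol(A)>2^{(k-m-4)d}$. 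This gives $\E|\mu_k(A)|\lesssim d\,2^{2d}2^{(k-m)(1-d/2)}\Per(A)\lesssim 2^{-(k-m)/2}\Per(A)$, since $d\ge 3$.
\end{itemize}
Summing the two geometric series in $|k-m|$ gives the bound with a constant independent of $n$ and $d$.
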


We first prove Theorem~\ref{thm:Sobolev} for indicator functions (Theorem~\ref{thm:setSobolev}), then extend the proof to arbitrary functions. The proof of Theorem~\ref{thm:Sobolev} appears at the conclusion of this section.

\subsection{A Sobolev inequality for indicator functions}
For $A \sbs [2^n]^d$, define
\begin{equation*}
    \partial A := \{\{u,v\}\in E([2^n]^d): |\{u,v\} \cap A| = 1\}
\end{equation*}
and 
\begin{equation}\label{E:Per}
    \Per(A) := \frac{1}{d}2^{-n(d-1)}|\partial A|.
\end{equation}
Using $|E([2^n]^d)|=d(2^n)^{d-1}(2^n-1)$, we get
\begin{equation*} \label{eq:Per(A)bound}
    \Per(A) \leq 2^n\frac{|\partial A|}{|E([2^n]^d)|}.
\end{equation*}
The above inequality implies that
\begin{equation} \label{eq:Per(A)<=W1(A)}
    \Per(A) \leq \|\one_A\|_{W^{1,1}}.
\end{equation}

Bolloba\'s and Leader established in \cite{BL91} (edge) isoperimetric inequalities on high-dimensional grids. Using the results of \cite{BL91}, we derive in the next theorem versions of the isoperimetric inequalities that play a crucial role in our proof of Theorem~\ref{thm:Sobolev}. 

\begin{theorem}[Isoperimetric Inequalities] \label{thm:isoperimetric}
There exists $C_{iso}\leq 2$ such that, for all $A \sbs [2^n]^d$,
\begin{equation}\label{E:IsoSmall}
    \Vol(A) \leq 2^{-2d} \implies \Vol(A)^{1-\frac1d} \leq C_{iso}\Per(A), \text{ and}
\end{equation}
\begin{equation}\label{E:IsoMedium}
    \Vol(A) \leq \frac12 \implies \Vol(A)^{1-\frac1d} \leq C_{iso}d\cdot\Per(A).
\end{equation}
\end{theorem}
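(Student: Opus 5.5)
The plan is to derive both inequalities directly from the edge-isoperimetric theorem of Bollob\'as and Leader \cite{BL91} for the grid $[k]^d$, which I recall in the following form. For every $A \sbs [k]^d$ with $|A| \le k^d/2$,
\begin{equation*}
  |\partial A| \;\ge\; \min_{1\le r\le d} \; r\,|A|^{1-\frac1r}\,k^{\frac dr - 1}.
\end{equation*}
Here the $r$-th term corresponds to the extremal set being a "$r$-dimensional slab/cube" $[a]^r\times[k]^{d-r}$. If the published statement is phrased slightly differently, for instance with the minimum realized by such sub-cubes, the computation below only needs this lower bound up to a constant factor $2$. I will apply it with $k = 2^n$. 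In both \eqref{E:IsoSmall} and \eqref{E:IsoMedium} we have $\Vol(A)\le \frac12$, so the hypothesis $|A|\le k^d/2$ is satisfied.

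The first step is to normalize. Write $V := \Vol(A)$, so $|A| = V k^d$. The $r$-th term then equals
\[
r\,V^{1-\frac1r}k^{d-\frac dr}\,k^{\frac dr -1} = r\,V^{1-\frac1r}k^{d-1}.
\]
Dividing by $d\,k^{d-1}$ as in \eqref{E:Per} gives
\begin{equation*}
  \Per(A) \;\ge\; \min_{1\le r\le d} \frac{r}{d}\, V^{1-\frac1r}.
\end{equation*}
Both claims now reduce to elementary comparisons of $\frac rd V^{1-\frac1r}$ with $V^{1-\frac1d}$.

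For \eqref{E:IsoMedium}, I use $V\le 1$ and $1-\frac1r \le 1-\frac1d$. Together these give $V^{1-\frac1r}\ge V^{1-\frac1d}$, so $d\,\Per(A) \ge \min_r r\,V^{1-\frac1d} \ge V^{1-\frac1d}$. Thus \eqref{E:IsoMedium} holds with constant $1$.

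For \eqref{E:IsoSmall}, I need $\frac rd V^{\frac1d-\frac1r}\ge \frac1{C_{iso}}$ for every $r$. Since $\frac1d-\frac1r\le 0$ and $V\le 2^{-2d}$, we get $V^{\frac1d-\frac1r} \ge 2^{-2d(\frac1d-\frac1r)} = 4^{t-1}$ with $t := d/r \ge 1$. So it suffices that $g(t) := 4^{t-1}/t \ge \frac12$ for all $t\ge 1$. The logarithmic derivative of $g$ is $\ln 4 - \frac1t > 0$ for $t \ge 1$, so $g$ is increasing there, and $g(t)\ge g(1)=1$. Hence \eqref{E:IsoSmall} holds with constant $1$ as well, and we can take $C_{iso}\le 2$ with room to spare. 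That slack absorbs any discrepancy between my recollection of \cite{BL91} and its exact form, for example a factor arising from boundary conventions.

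The main obstacle is this very first step: pinning down the precise statement in \cite{BL91} and checking it is the edge (not vertex) boundary in the grid without wraparound. If instead their theorem is given in the form "$|\partial A|\ge \min_r |\partial C_r|$" over explicit extremal sets $C_r$, I would compute $|\partial C_r|$ for $C_r = [a]^r\times[k]^{d-r}$ with $a^r k^{d-r} = |A|$. That boundary is $r\,a^{r-1}k^{d-r} = r|A|^{1-1/r}k^{d/r-1}$, which recovers the displayed bound, and the rest of the argument goes through unchanged.
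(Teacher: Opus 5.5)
Your proof is correct. For \eqref{E:IsoSmall} it is essentially the paper's argument. Both apply \cite[Theorem 3]{BL91} and show that when $\Vol(A)\le 2^{-2d}$ every term of the minimum dominates the $r=d$ term, which gives constant $1$. The paper fixes $|A|=2^{-kd}2^{nd}$ and shows $r\mapsto f_r(|A|)$ is decreasing on $[1,d]$; your monotonicity of $4^{t-1}/t$ in $t=d/r$ is the same computation.

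For \eqref{E:IsoMedium} you take a genuinely different and shorter route. The paper first reduces via \eqref{E:IsoSmall} to the range $2^{-2d}2^{nd}\le|A|\le\frac12 2^{nd}$. It then switches to the cruder bounds of \cite[Corollary 4]{BL91}: $|\partial A|\ge 4|A|/2^n$ below $2^{nd}/4$, and $|\partial A|\ge 2^{n(d-1)}$ in the middle range. A dyadic case analysis then loses a factor $2$, and that loss is the only reason the paper states $C_{iso}\le 2$ rather than $C_{iso}=1$.

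You instead use that Theorem 3 of \cite{BL91} already holds for all $|A|\le\frac12 2^{nd}$. This is how Bollob\'as and Leader state it, to my recollection; the paper's display \eqref{E:BLTh3} leaves that hypothesis implicit. Then $rV^{1-1/r}\ge V^{1-1/r}\ge V^{1-1/d}$ for $V\le 1$ gives $\Vol(A)^{1-\frac1d}\le d\,\Per(A)$ in one line. Your version therefore needs only one cited result, avoids the case split, and shows $C_{iso}=1$ suffices for both inequalities. The paper's route gains nothing extra for this statement.

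The bound you recall coincides with the one the paper quotes, so your hedging about its precise form is unnecessary.
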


\begin{proof}
{\bf Proof of \eqref{E:IsoSmall}.} 

By the definitions of $\Per(A)$ \eqref{E:Per} and $\Vol(A)$ \eqref{E:Vol}, inequality \eqref{E:IsoSmall} is equivalent to
\begin{equation}\label{E:IsoGraph}  |A| \le 2^{-2d} 2^{nd} \implies d|A|^{1-\frac1d} \le C_{iso} |\partial A|.
\end{equation}

Let $A \sbs [2^n]^d$. For $r \in [1,d]$, define $f_r: [0,\infty) \to [0,\infty)$ by
$$f_r(t) := t^{1-\frac1r}\cdot r\cdot(2^n)^{\frac dr-1}.$$
Bolloba\'s-Leader \cite[Theorem 3]{BL91} proved that
\begin{equation}\label{E:BLTh3}|\partial A|\ge\min\{f_r(|A|):~r\in\{1,\dots d\}\}.
\end{equation}

Suppose that $|A| = 2^{-kd} 2^{nd}$ with $k \ge 2$. Then
\[f_r(|A|)=2^{-kd(1-\frac1r)}\cdot r\cdot (2^n)^{d-1},\]
and thus
\[\frac{d}{dr} f_r(|A|)=2^{-kd(1-\frac1r)}\cdot (2^n)^{d-1}\left(-\frac{kd}{r^2}\ln 2\cdot r+1\right).\]

If $r \le d$, then $r\le 2d\ln2\le kd\ln 2$, so the function $r \mapsto f_r(|A|)$ is decreasing for $r\in [1,d]$. Thus the infimum in \eqref{E:BLTh3} is attained for $r=d$, and we have
\begin{equation}
  |\partial A|\ge d|A|^{1-\frac1d}.  
\end{equation}
That is, \eqref{E:IsoGraph} holds for any $C_{iso} \geq 1$.
\bigskip

{\bf Proof of \eqref{E:IsoMedium}:} 
Using the definitions of $\Per(A)$ \eqref{E:Per} and $\Vol(A)$ \eqref{E:Vol}, inequality \eqref{E:IsoMedium} is equivalent to 
\begin{equation}\label{E:IsoGraphM}  |A| \le 2^{nd - 1} \implies |A|^{\frac{d-1}{d}} \le C_{iso} |\partial A|.
\end{equation}
By \eqref{E:IsoSmall}, it suffices to consider the case that 
\begin{equation}\label{E:Case2} 2^{-2d}\cdot 2^{nd}\le |A|\le \frac{1}{2}2^{nd}.\end{equation}

We use the following parts of \cite[Corollary 4]{BL91}:

\begin{equation}\label{E:BLCor4}
	|\partial A|\ge \begin{cases}
		\frac{4|A|}{2^n} &\hbox{ if } |A|< \frac{2^{nd}}4,\\
		2^{n(d-1)} &\hbox{ if } \frac{2^{nd}}4\le |A|\le \frac{3\cdot 2^{nd}}4.
			\end{cases}
\end{equation}

If 
\[\frac14\cdot {2^{nd}}\le |A|\le \frac12\cdot 2^{nd},\]
then \eqref{E:IsoGraphM} (with any $C_{iso}\ge 1$) immediately follows from the second part of \eqref{E:BLCor4}. 
We thus consider the case
\[\frac1{2^{m+1}}\cdot {2^{nd}}\le |A|< \frac1{2^m}\cdot 2^{nd}, \quad 2\le m< 2d-1.\]
In this case, the first inequality in \eqref{E:BLCor4} implies
\[	|\partial A|\ge \frac{4}{2^n}\cdot \frac1{2^{m+1}}\cdot 2^{nd}=\frac1{2^{m-1}}\cdot 2^{n(d-1)}.\]
On the other hand, since $2\le m< 2d-1$,
\[|A|^{\frac{d-1}d}<\frac1{2^{m\frac{d-1}d}}\cdot 2^{n(d-1)} \le \frac2{2^{m-1}}\cdot 2^{n(d-1)}.\]
If $2\le m< 2d-1$, then 
\[\frac1{2^{m-1}}\ge\frac12\cdot\frac1{2^{m\frac{d-1}d}},
\]
so $|A|^{\frac{d-1}{d}}\le 2|\partial A|$. Therefore, \eqref{E:IsoGraphM} holds for any $C_{iso} \ge 2$.
\end{proof}

The next two lemmas prove the Sobolev inequality for indicator functions for each of the two summands in the definition of $\nu_k$ (see \eqref{E:DefNu}). The first one provides a sufficient bound for $\mu_k(A)$, $A \sbs [2^n]^d$.

\begin{lemma} \label{lem:Sobolev1-alt}
 Let $k \in \N$ with $k \geq 8$. For any $A \sbs [2^n]^d$ with $\Vol(A) \leq \frac12$, let 
 $m(A) \in \N$ be such that $2^{-m(A)} < \Vol(A)^{\frac{1}{d}} \leq 2^{-m(A)+1}$. Then
\begin{equation*}
  \E[|\mu_k(A)|] \leq 2^{9}C_{iso}2^{-\frac{|m(A)-k|}{2}}\Per(A).
\end{equation*}
\end{lemma}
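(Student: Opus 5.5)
The plan is to write $\mu_k(A)=p^{-1}2^k\sum_{Q\in\D_k}X_Q\Vol(A\cap Q)$ and split into two regimes according to the sign of $m(A)-k$. When the cubes are at least as large as $A$ there will be no cancellation to exploit, so a crude first-moment bound should suffice. When the cubes are much smaller than $A$ the sum is a sum of many independent centered terms, so cancellation makes it small. The factor $2^{-|m(A)-k|/2}$ is meant to come from these two mechanisms. Write $m=m(A)$.

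\emph{Step 1 (the $L_1$ bound, for $k\le m$ and for $k$ close to $m$).} Since $\E|X_Q|=p$, the triangle inequality gives
$$\E|\mu_k(A)|\le p^{-1}2^k\sum_Q p\Vol(A\cap Q)=2^k\Vol(A).$$
Note that this bound carries no $p^{-1}$ loss. Using $\Vol(A)=\Vol(A)^{1/d}\Vol(A)^{1-1/d}\le 2^{-m+1}\Vol(A)^{1-1/d}$, Theorem~\ref{thm:isoperimetric} then bounds $\E|\mu_k(A)|$ by $2^{k-m+1}C_{iso}\Per(A)$ when $\Vol(A)\le 2^{-2d}$. The condition $\Vol(A)\le 2^{-2d}$ holds once $m\ge 3$, by \eqref{E:IsoSmall}. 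For $k\le m$ this is $\le 2C_{iso}2^{-(m-k)}\Per(A)$, which is stronger than required. The same bound also handles $0<k-m\le 16$ at the cost of the constant $2^9$. The exceptional case $m\le 2$ forces $k-m\ge 6$, and there one would use \eqref{E:IsoMedium} and pay a factor $d$. That factor has to be absorbed by the decay in Step 2, so I would try to place all such cases into Step 2.

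\emph{Step 2 (the $L_2$ bound with cancellation, for $k-m$ large).} First split $\D_k$ into interior cubes $Q\subseteq A$, boundary cubes meeting both $A$ and its complement, and the remaining cubes, which contribute nothing. For the interior cubes, independence gives an $L_2$ bound:
$$\Bigl(\E\Bigl|\sum_{Q\subseteq A}p^{-1}2^kX_Q\Vol(Q)\Bigr|^2\Bigr)^{1/2}\le p^{-1/2}2^{k}2^{-kd/2}\Vol(A)^{1/2}.$$
Substituting $p^{-1/2}=2^{2d}$ and $\Vol(A)\approx 2^{-md}$, and comparing with $\Per(A)\gtrsim \Vol(A)^{1-1/d}/(C_{iso}d)$, leaves a ratio of order $d\,2^{2d-(k-m)(d/2-1)}$. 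This is $\le 2^{-(k-m)/2}$ once $k-m$ exceeds a fixed multiple of $d/(d-2)$, which is a constant for $d\ge3$. For the boundary cubes I would not use cancellation at all. Instead I would use the per-cube $L_1$ bound $2^k\Vol(A\cap Q)$, where I may replace $A$ by its complement inside $Q$ after pulling out the full-cube part. I would then control $2^k\min\{\Vol(A\cap Q),\Vol(Q\setminus A)\}$ by the edges of $\partial A$ inside $Q$, via a rescaled relative version of Theorem~\ref{thm:isoperimetric}, or simply via the trivial bound that a cube cut by $A$ contains at least one boundary edge per line crossing. Summing over $Q$ would give a contribution of order $\Per(A)$ times a factor decaying in $k-m$.

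\emph{Main obstacle.} The main obstacle is the boundary cubes: the $L_1$ bound there costs $2^k\Vol(A\cap Q)$ per cube, and this has to decay like $2^{-(k-m)/2}$ relative to $\Per(A)$ without losing a factor of $d$ or $p^{-1}$. I would first try to bound the total volume of $A$ within boundary cubes by $2^{-k}\Per(A)$ times a constant, using that each such cube has relatively small volume compared with the edges it contains. If that fails, I would apply the $L_2$ bound to the boundary cubes as well, using $\Vol(A\cap Q)^2\le\Vol(Q)\Vol(A\cap Q)$. The remaining work is careful bookkeeping of constants so that $2^9$ covers the crossover region between the two regimes.
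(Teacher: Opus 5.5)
\paragraph{The boundary-cube step fails.}
Your two-regime skeleton matches the paper's proof: the first-moment bound $\E|\mu_k(A)|\le 2^k\Vol(A)$ with \eqref{E:IsoSmall} when $k-m$ is small, and a second-moment bound with \eqref{E:IsoMedium} when $k-m$ is large. The problem is your primary treatment of the boundary cubes without cancellation. You claim that summing the per-cube $L_1$ bounds gives $\Per(A)$ times a factor decaying in $k-m$, but no such decay exists.

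Take $k<n$ and $A=\{x: x_1\le 2^{n-1}-2^{n-k-1}\}$. Then $\Vol(A)\le\frac12$, $m(A)=1$ and $\Per(A)=\frac1d$. The bounding hyperplane bisects a whole layer of $2^{k(d-1)}$ cubes of $\D_k$, so
\[2^k\sum_{Q\ \mathrm{boundary}}\min\{\Vol(A\cap Q),\Vol(Q\setminus A)\}=\tfrac12\]
for every such $k$. The lemma, however, requires $O(2^{-k/2}/d)$.

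Consequences of this example:
\begin{itemize}
\item The volume of $A$ inside boundary cubes is not $O(2^{-k}\Per(A))$ uniformly in $d$, so your first idea is off by a factor $d$.
\item Even that bound would carry no factor $2^{-(k-m)/2}$.
\item The relative isoperimetric inequality in each cube gives at best $C_{iso}\,d\,\Per(A)$, again with no decay.
\end{itemize}
Here the smallness of $\mu_k(A)$ comes entirely from cancellation among the $2^{k(d-1)}$ independent boundary terms.

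\paragraph{Your fallback is the right fix.}
It also makes the interior/boundary split unnecessary. Since $\Vol_Q(A)\le 2^{-kd}$, independence gives
\[\E[\mu_k(A)^2]=p^{-1}2^{2k}\sum_Q\Vol_Q(A)^2\le p^{-1}2^{2k}2^{-kd}\Vol(A)\]
for every $A$. Hence $\E|\mu_k(A)|\le 2^k\Vol(A)(pN)^{-1/2}$ with $N=2^{kd}\Vol(A)$, however $A$ meets the cubes. This is exactly the paper's argument.

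\paragraph{The crossover point is wrong.}
The first-moment bound $C_{iso}2^{k-m+1}$ is at most $2^9C_{iso}2^{-|k-m|/2}$ only when $\frac32(k-m)\le 8$, i.e.\ $k-m\le 5$, not up to $16$. So the second-moment regime must cover every $k-m\ge 6$. There, $pN>2^{(k-m-4)d}$ turns your ratio into
\[C_{iso}\,d\,2^{2d+1-\frac{d-3}{2}(k-m)}\cdot 2^{-\frac{k-m}{2}},\]
whose prefactor is at most $d\,2^{10-d}\le 2^9$. For $d=3$ there is no decay beyond $2^{-(k-m)/2}$. So your claim that the ratio is $\le 2^{-(k-m)/2}$ for large $k-m$ holds only up to this constant, which is why the $2^9$ is needed.
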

\begin{proof}
  For brevity, set $m=m(A)$. Set $N := 2^{kd}\Vol(A) =\Vol(A)/\Vol(Q)$ for $Q\in \D_k$. We claim that
 \begin{equation}\label{eq:mu-k-bound}  \E[|\mu_k(A)|]
     \le 2^k \Vol(A) \min \{1, (pN)^{-\frac{1}{2}}\}.
  \end{equation}
  To gain intuition for this inequality, assume for the moment that $A$ is equal to the union of some number of cubes $Q$ in $\D_k$. Then $N$ is the number of such cubes, and thus $\mu_k(A)$ is an i.i.d.\ sum of $N$ symmetric random variables taking the value 0 with probability $1-p$ and the two values $\pm p^{-1}2^k\frac{\Vol(A)}{N}$ with probability $p/2$ each. It is easy to see that \eqref{eq:mu-k-bound} holds for such an i.i.d.\ sum. We will prove that the same inequality holds, in fact, for any $A \sbs [2^n]^d$.
  
  First, for every $Q\in \D_k$ and $x\in Q$, the expected Radon-Nikod\'ym density of $|\mu_k|$ at $x$ is given by
  \begin{equation}\label{eq:mu-density}
    \E\Big[\Big|\frac{\d\mu_k}{\dVol}(x)\Big|\Big] = \E\Big[\Big|\frac{\mu_k(Q)}{\Vol(Q)}\Big|\Big] = \E[|p^{-1}2^kX_Q|] = 2^k.
  \end{equation}
  Therefore,
  \begin{equation}\label{eq:mu-k-bound-1}
    \E[|\mu_k(A)|] = \E\Big[\Big|\int_A \frac{\d\mu_k}{\dVol}\dVol\Big|\Big] 
    \leq \int_A\E\Big[\Big|\frac{\d\mu_k}{\dVol}\Big|\Big]\dVol \\
    = 2^k\Vol(A).
  \end{equation}
  This bound is sharp when $A\sbs Q$ for some $Q\in \D_k$.

  Second, since the random variables $\{X_Q\}_{Q\in\D_k}$ are independent mean-zero variables with $\E[X_Q^2] = p$ for all $Q\in\D_k$, we have that
  \begin{multline*}
    \E[\mu_k(A)^2] = \sum_{Q\in\D_k} \E\left[(p^{-1}2^k X_Q\Vol_Q(A))^2\right]\\
    = p^{-2}2^{2k}\sum_{Q\in\D_k}\E[X_Q^2]\Vol_Q(A)^2 = p^{-1}2^{2k}\sum_{Q\in\D_k}\Vol_Q(A)^2.
  \end{multline*}
  Then, since $\Vol_Q(A) \le \Vol(Q) = 2^{-kd}$ for all $Q \in \D_k$, it holds that
  $$\E[\mu_k(A)^2] \leq p^{-1}2^{2k} \max_{Q\in \D_k} \Vol_Q(A) \sum_{Q\in\D_k} \Vol_Q(A) \le p^{-1} 2^{2k} 2^{-kd} \Vol(A).$$
  Therefore,
  \begin{multline}\label{eq:mu-k-bound-2}
    \E[|\mu_k(A)|] \le \sqrt{\E[\mu_k(A)^2]} \le \sqrt{p^{-1} 2^{2k} 2^{-kd} \Vol(A)} \\ = 2^k \Vol(A) \sqrt{p^{-1}2^{-kd}\Vol(A)^{-1}}
    = 2^k \Vol(A)(pN)^{-\frac{1}{2}}.
  \end{multline}
  Inequality \eqref{eq:mu-k-bound} now follows from \eqref{eq:mu-k-bound-1} and \eqref{eq:mu-k-bound-2}.

  To prove the lemma, we consider two cases. First, suppose that $k-m \le 5$. 
  Since $k\ge 8$, we have $m\ge 3$, so $\Vol(A) \le 2^{-2d}$.
  By our choice of $m$ and $N$,
  \begin{equation}\label{eq:k-m}
    2^{(k-m-4)d} < pN \le 2^{(k-m-3)d}.
  \end{equation}  
  Using the first part of Theorem~\ref{thm:isoperimetric}, we have
  $$\frac{\E[|\mu_k(A)|]}{\Per(A)} \stackrel{\eqref{eq:mu-k-bound-1}}{\le} 2^k \frac{\Vol(A)}{\Per(A)} \overset{\eqref{E:IsoSmall}}{\le} C_{iso} \Vol(A)^{\frac{1}{d}} 2^k \le C_{iso} 2^{k-m+1}.$$
  The ratio
  $$\frac{2^{k-m+1}}{2^{-\frac{|k-m|}{2}}} = 2^{k-m+1 + \frac{|k-m|}{2}}$$
  is increasing with $k-m$, so $2^{k-m+1} < 2^9 2^{-\frac{|k-m|}{2}} $, and thus
  $$\frac{\E[|\mu_k(A)|]}{\Per(A)} \le C_{iso} 2^{k-m+1} \le 2^9 C_{iso} 2^{-\frac{|k-m|}{2}},$$
  as desired.
  
  Otherwise, suppose that $k-m \ge 6$. Using the second part of Theorem~\ref{thm:isoperimetric}, we have
  \begin{align*}
    \frac{\E[|\mu_k(A)|]}{\Per(A)} 
    &\stackrel{\eqref{eq:mu-k-bound-2}}{\le} 2^k \frac{\Vol(A)}{\Per(A)} (pN)^{-\frac{1}{2}} \\ 
    &\overset{\eqref{E:IsoMedium}}{\le} 2^k C_{iso} d \Vol(A)^{\frac{1}{d}} (pN)^{-\frac{1}{2}} \\ 
    &\le d C_{iso} 2^{k-m+1} (pN)^{-\frac{1}{2}}.
  \end{align*}
  Then, by \eqref{eq:k-m},
  \begin{align*}
    \frac{\E[|\mu_k(A)|]}{\Per(A)} 
    &\le d C_{iso} 2^{k-m+1} 2^{-\frac{d}{2}(k -m) + 2d} \\ 
    &= 2^{2d + 1 - \frac{d-3}{2}(k-m)}d C_{iso} 2^{-\frac{k-m}{2}}.
  \end{align*}
  Since $d\ge 3$ and $k-m\ge 6$, we have that
  $$2^{2d + 1 - \frac{d-3}{2}(k-m)} d \le 2^{2d + 1 - 3(d-3)} d = 2^{- d + 10} d \le 2^9.$$
  Thus, $\E[|\mu_k(A)|] \le 2^9 C_{iso} \Per(A) 2^{-\frac{|k-m|}{2}}$, which proves the lemma.
\end{proof}

The next lemma provides a sufficient bound for the other summand in $\nu_k(A)$, $\mu_k([2^n]^d)\Vol(A)$. It is a simple consequence of Lemma~\ref{lem:mu(grid)}.

\begin{lemma} \label{lem:Sobolev2}
Let $k \in \N$ with $k \geq 5$. For any $A \sbs [2^n]^d$ with $\Vol(A) \leq \frac12$,
$$\E[|\mu_k([2^n]^d)\Vol(A)|] \leq (\sqrt{2})^{16-k}C_{iso}\Per(A).$$
\end{lemma}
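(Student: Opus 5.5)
Since $\Vol(A)$ is deterministic, the expectation factors as
\[
\E[|\mu_k([2^n]^d)\Vol(A)|]=\Vol(A)\,\E[|\mu_k([2^n]^d)|].
\]
Lemma~\ref{lem:mu(grid)} (valid because $k\ge 5$) then gives the bound $\frac{1}{d}(\sqrt2)^{16-k}\Vol(A)$. It therefore suffices to show the deterministic inequality
\[
\Vol(A)\le C_{iso}\, d\,\Per(A)\qquad\text{whenever } \Vol(A)\le \tfrac12 .
\]
The factor $\frac1d$ from Lemma~\ref{lem:mu(grid)} is exactly what cancels the $d$ here.

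For this deterministic step I will invoke the second isoperimetric inequality \eqref{E:IsoMedium} of Theorem~\ref{thm:isoperimetric}, which gives $\Vol(A)^{1-\frac1d}\le C_{iso}d\,\Per(A)$ when $\Vol(A)\le\frac12$. Because $0\le\Vol(A)\le 1$ and $1-\frac1d\le 1$, we have $\Vol(A)\le \Vol(A)^{1-\frac1d}$, so $\Vol(A)\le C_{iso}d\,\Per(A)$. (If $A=\emptyset$ both sides vanish, so there is no issue.)

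Combining the two displays yields
\[
\E[|\mu_k([2^n]^d)\Vol(A)|]\le \tfrac1d(\sqrt2)^{16-k}\cdot C_{iso}d\,\Per(A)=(\sqrt2)^{16-k}C_{iso}\Per(A).
\]
There is no real obstacle. The only point needing care is to use \eqref{E:IsoMedium}, which covers the whole range $\Vol(A)\le\frac12$, rather than \eqref{E:IsoSmall}, and to note that its factor of $d$ is absorbed by the $\frac1d$ in Lemma~\ref{lem:mu(grid)}.
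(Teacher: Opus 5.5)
Your proof is correct and is essentially the paper's argument. The paper likewise combines \eqref{E:IsoMedium} with $\Vol(A)\le\Vol(A)^{1-\frac1d}$ to reduce the claim to $\E[|\mu_k([2^n]^d)|]\,\Vol(A)\le\frac1d(\sqrt2)^{16-k}\Vol(A)$, which is Lemma~\ref{lem:mu(grid)} multiplied by $\Vol(A)$, with the factor $d$ from \eqref{E:IsoMedium} cancelling the $\frac1d$ exactly as you describe.
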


\begin{proof}
Let $A \sbs [2^n]^d$ with $\Vol(A) \leq \frac12$. By Theorem~\ref{thm:isoperimetric} and the fact that $\Vol(A)\leq \Vol(A)^{1-\frac1d}$, it suffices to prove that
\begin{equation*}
    \E[|\mu_k([2^n]^d)\Vol(A)|] \leq (\sqrt{2})^{16-k}\frac{1}{d}\Vol(A).
\end{equation*}
This inequality is precisely the conclusion of Lemma~\ref{lem:mu(grid)}, with each side multiplied by $\Vol(A)$.
\end{proof}

\begin{theorem}[Sobolev for Indicator Functions] \label{thm:setSobolev}
 There exists a constant $C<\infty$ (independent of $n,d$) such that, for any $A \sbs [2^n]^d$,
\begin{equation*}
    \sum_{k=8}^n \E[|\nu_k(A)|] \leq C\Per(A).
\end{equation*}
\end{theorem}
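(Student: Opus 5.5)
First reduce to the case $\Vol(A)\le \frac12$. Since $\nu_k$ has total mass $0$ almost surely, $\nu_k(A) = -\nu_k([2^n]^d\setminus A)$. Moreover, the complement $A^c := [2^n]^d\setminus A$ satisfies $\partial A^c = \partial A$, so $\Per(A^c)=\Per(A)$. If $\Vol(A)>\frac12$ we may therefore replace $A$ by $A^c$. From now on assume $\Vol(A)\le\frac12$; if $A=\emptyset$ both sides vanish.

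Next split $\nu_k$ according to \eqref{E:DefNu}. By the triangle inequality,
\[
\E[|\nu_k(A)|] \le \E[|\mu_k(A)|] + \E[|\mu_k([2^n]^d)\Vol(A)|].
\]
For the second term, Lemma~\ref{lem:Sobolev2} (valid since $k\ge 8\ge 5$) gives the bound $(\sqrt2)^{16-k}C_{iso}\Per(A)$. Summing over $k\ge 8$ yields at most
\[
C_{iso}\Per(A)\sum_{k\ge 8}(\sqrt2)^{16-k} = C_{iso}\Per(A)\cdot\frac{2^4}{1-2^{-1/2}},
\]
which is a universal constant times $\Per(A)$.

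For the first term, Lemma~\ref{lem:Sobolev1-alt} gives, with $m=m(A)$ fixed by $A$ alone and independent of $k$,
\[
\sum_{k=8}^n \E[|\mu_k(A)|] \le 2^9C_{iso}\Per(A)\sum_{k\in\mathbb{Z}}2^{-|m-k|/2} = 2^9C_{iso}\Per(A)\cdot\frac{1+2^{-1/2}}{1-2^{-1/2}}.
\]
The two-sided geometric series is bounded uniformly in $m$, so this is also a universal constant times $\Per(A)$. Using $C_{iso}\le 2$ and adding the two contributions gives the theorem with an explicit constant $C$ independent of $n$ and $d$.

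There is no real obstacle here, since all the work is done in the two lemmas. The only points to check are the complement reduction, namely that $\nu_k$ has total mass zero (true by construction) and that the perimeter is symmetric under complementation, and that $m(A)$ is well defined for every nonempty $A$ with $\Vol(A)\le \frac12$. The latter holds because $0<\Vol(A)^{1/d}<1$, so a unique $m\in\N$ with $2^{-m}<\Vol(A)^{1/d}\le 2^{-m+1}$ exists.
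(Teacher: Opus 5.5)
Your proposal is correct and follows the paper's proof. The paper likewise reduces to $\Vol(A)\le\frac12$ by complementation, splits $\nu_k(A)$ with the triangle inequality via \eqref{E:DefNu}, and sums the bounds of Lemmas~\ref{lem:Sobolev1-alt} and \ref{lem:Sobolev2} as geometric series that are uniform in $m(A)$. Your explicit constants, your justification of the complement symmetry, and your check that $m(A)$ is well defined are all accurate.
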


\begin{proof}
Let $A \sbs [2^n]^d$. Since each side of the desired inequality is unchanged by the permutation $A \leftrightarrow A^c$, we may assume that $\Vol(A) \leq \frac12$. Let $m(A) \in \N$ with $2^{-m(A)} < \Vol(A)^{\frac1d} \leq 2^{-m(A)+1}$. Then using Lemmas~\ref{lem:Sobolev1-alt} and \ref{lem:Sobolev2}, we can find constants $C_1,C_2,C < \infty$ (independent of $n,d,A$) such that
\begin{align*}
    \sum_{k=8}^n \E[|\nu_k(A)|] &\leq \sum_{k=8}^n \E[|\mu_k(A)|] + \E[|\mu_k([2^n]^d)\Vol(A)|] \\
    &< \sum_{k=8}^\infty C_1(\sqrt{2})^{-|m(A)-k|}\Per(A) + C_2(\sqrt{2})^{-k}\Per(A) \\
    &\leq C\Per(A).
\end{align*}
\end{proof}

\subsection{From Indicator Functions to General Functions}

The following theorem allows one to reduce the proof of a Sobolev inequality from general functions to indicator functions.

\begin{theorem}[{\cite[Theorem 2.3]{GO26}}] \label{thm:set-to-function}
Let $|||\cdot|||$ be a seminorm on the set of functions from $[2^n]^d \to \R$ such that $|||\cdot|||$ vanishes on constant functions, and let $C \in [0,\infty)$ be a constant. If $|||\one_A||| \leq C \|\one_A\|_{W^{1,1}}$ for all subsets $A \sbs [2^n]^d$, then $|||f||| \leq C\|f\|_{W^{1,1}}$ for all $f: [2^n]^d \to \R$.
\end{theorem}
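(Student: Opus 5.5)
The plan is a discrete layer-cake (coarea) decomposition. It combines the triangle inequality for $|||\cdot|||$ with the fact that $\|\cdot\|_{W^{1,1}}$ is \emph{additive}, not merely subadditive, along the level-set decomposition of $f$.

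First I decompose $f$. Since $[2^n]^d$ is finite, $f$ takes finitely many values $t_1<t_2<\dots<t_r$. For $i\in\{2,\dots,r\}$ set $A_i:=\{x: f(x)\ge t_i\}$. Then
\begin{equation*}
f = t_1\one_{[2^n]^d} + \sum_{i=2}^r (t_i-t_{i-1})\one_{A_i},
\end{equation*}
which can be checked pointwise: if $f(x)=t_j$, then $x\in A_i$ exactly for $i\le j$, and the sum telescopes to $t_j$. If $r=1$, then $f$ is constant, both sides of the desired inequality vanish, and there is nothing to prove.

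Second, I bound $|||f|||$ from above. The seminorm vanishes on constants, so the triangle inequality gives $|||f|||\le |||f-t_1|||+|||t_1|||=|||f-t_1|||$. Applying the triangle inequality and positive homogeneity again, together with $t_i-t_{i-1}>0$ and the hypothesis on indicator functions,
\begin{equation*}
|||f|||\le \sum_{i=2}^r (t_i-t_{i-1})\,|||\one_{A_i}||| \le C\sum_{i=2}^r (t_i-t_{i-1})\,\|\one_{A_i}\|_{W^{1,1}}.
\end{equation*}

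Third, I prove the coarea identity $\|f\|_{W^{1,1}}=\sum_{i=2}^r (t_i-t_{i-1})\|\one_{A_i}\|_{W^{1,1}}$, which closes the argument. By the definition of $\|\cdot\|_{W^{1,1}}$ as a normalized sum over edges, it suffices to check, for each edge $\{u,v\}$, that
\begin{equation*}
|f(v)-f(u)|=\sum_{i=2}^r (t_i-t_{i-1})\,|\one_{A_i}(v)-\one_{A_i}(u)|.
\end{equation*}
Suppose $f(u)=t_a\le f(v)=t_b$. Then $|\one_{A_i}(v)-\one_{A_i}(u)|=1$ exactly when $a<i\le b$, and is $0$ otherwise. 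The right-hand side therefore telescopes to $t_b-t_a$, as required. Summing over edges and multiplying by $2^n/|E([2^n]^d)|$ gives the identity.

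The only point needing care is this additivity step. The triangle inequality alone would give the wrong direction for the Sobolev norm; it is the monotonicity of the level sets $A_i$ that makes each edge count exactly the levels it crosses. Everything else is routine.
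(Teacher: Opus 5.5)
Your proof is correct. The level-set decomposition $f = t_1\one + \sum_{i\ge 2}(t_i-t_{i-1})\one_{A_i}$ and the triangle inequality for $|||\cdot|||$ bound $|||f|||$ above, and the edge-by-edge telescoping identity shows that $\|\cdot\|_{W^{1,1}}$ is exactly additive along the nested level sets, which closes the argument; this discrete coarea (layer-cake) argument is the standard proof of the reduction, and the paper itself gives no proof, citing \cite{GO26} instead.
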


\begin{proof}[Proof of Theorem~\ref{thm:Sobolev}]
The theorem follows immediately from Theorems~\ref{thm:setSobolev} and \ref{thm:set-to-function} and inequality \eqref{eq:Per(A)<=W1(A)}.
\end{proof}

\section{\texorpdfstring{Sobolev Inequality Implies $L_1$-Distortion}{Sobolev Inequality Implies L1-Distortion}} \label{sec:sobolev-distortion}

In this final section, we deduce both Theorem~\ref{thm:mainthm} and Corollary~\ref{cor:maincor} using the following linearization result.

\begin{theorem}[{\cite[\S 5]{GO26}}]\label{T:lin&EMD}
For every finite metric space $X$, the equality
\[c_1(\emd(X))=c_1(\tc(X)) = c_{1,{\rm lin}}(\tc(X))\] holds, where $c_{1,{\rm lin}}(\tc(X))$ denotes the infimal distortion among all $m\in\N$ and all {\it linear} maps $f: \tc(X) \to \ell_1^m$.
\end{theorem}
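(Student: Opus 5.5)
The chain of inequalities
\[c_1(\emd(X))\le c_1(\tc(X))\le c_{1,{\rm lin}}(\tc(X))\]
is immediate, so the plan is to prove these two easy inequalities and then spend most of the effort on the reverse inequality $c_{1,{\rm lin}}(\tc(X))\le c_1(\emd(X))$.

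\emph{The easy inequalities.} Fix $x_0\in X$. The map $\mu\mapsto \mu-\delta_{x_0}$ is an isometry from $\emd(X)$ into $\tc(X)$, because $\emd(\mu,\nu)=\|\mu-\nu\|_{\tc}$. This gives the first inequality. A linear embedding is in particular an embedding, and $\ell_1^m$ sits isometrically inside $L_1$; this gives the second.

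\emph{Step 1: blow up at an interior point.} Let $F:\emd(X)\to L_1$ have distortion $D$, and let $u$ be the uniform probability measure on $X$. Then $u+B$ is contained in the simplex of probability measures, where $B$ is a small $\|\cdot\|_{\tc}$-ball of radius $r$ in $V:=\tc(X)$. For $t>0$ define $F_t(v):=t^{-1}\bigl(F(u+tv)-F(u)\bigr)$. Each $F_t$ has distortion $D$ on the ball of radius $r/t$ in $V$. I would pass to a nonprincipal ultralimit as $t\to 0$. This gives a map from all of $V$ into an ultrapower of $L_1$ with distortion $\le D$. An ultrapower of $L_1$ is again an $L_1(\mu)$-space, so this is harmless. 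Alternatively one can avoid ultrapowers: distortion into $L_1$ is determined by finite subsets, and each finite subset of $V$ lies in some rescaled ball.

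\emph{Step 2: pass to cut measures and average.} By the cut-cone representation, any finite subset $Y\sbs V$ with an $L_1$-valued map $G$ admits a nonnegative measure $\omega$ on subsets $S$ with
\[\|G(x)-G(y)\|_1=\int|\one_S(x)-\one_S(y)|\,\d\omega(S).\]
The key point is that the two-sided bounds
\[\|x-y\|_{\tc}\le \int|\one_S(x)-\one_S(y)|\,\d\omega(S)\le D\|x-y\|_{\tc}\]
are \emph{linear} in $\omega$, so they survive any averaging of the measures $\omega$. By contrast, averaging the vectors $G(x)$ in $L_1$ would destroy the lower bound. So I would average the translated cut measures $\omega(\cdot+z)$ over $z$ in large balls of $V$, and similarly average the dilated ones $\omega_t$, rescaled by $1/t$, over $t\in[1,T]$ with respect to $\d t/t$. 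Then I would take limits, by compactness on a finite grid followed by a diagonal argument, or by invariant means on the amenable groups $(V,+)$ and $(\R_{>0},\cdot)$.

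The limiting object is a semimetric $\rho$ on $V$ with the following properties:
\begin{itemize}
\item it is $L_1$-embeddable (it is a cut-measure metric);
\item it is translation invariant and positively homogeneous;
\item it satisfies $\|x-y\|_{\tc}\le\rho(x,y)\le D\|x-y\|_{\tc}$.
\end{itemize}
Hence $\|v\|':=\rho(v,0)$ is a norm on $V$ that is $D$-equivalent to $\|\cdot\|_{\tc}$.

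\emph{Step 3: linearize.} A finite-dimensional normed space whose metric embeds isometrically into $L_1$ embeds \emph{linearly} and isometrically into $L_1$. This is the classical fact that a metric that is $L_1$-embeddable and comes from a norm makes the space a subspace of $L_1$: the norm is then of negative type, and one applies Bretagnolle–Dacunha-Castelle–Krivine. Finally, every finite-dimensional subspace of $L_1$ embeds linearly into some $\ell_1^m$ with distortion $1+\varepsilon$. Combining these gives $c_{1,{\rm lin}}(\tc(X))\le D(1+\varepsilon)$ for every $\varepsilon>0$.

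\emph{Main obstacle.} I expect Step 2 to be the delicate one. One must make sure that the averaged and limiting cut measures still represent a genuine $L_1$-metric with the same bounds. One must also ensure that the boundary effects of averaging over finite balls vanish in the limit. Here I would use that $V$ is finite-dimensional, so the balls form a F{\o}lner sequence. If that route becomes messy, the fallback is the Heinrich–Mankiewicz differentiation argument applied directly to the cut-measure representation.
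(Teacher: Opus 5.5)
Your argument is correct, but it takes a genuinely different route from the paper. The paper does not prove this statement itself. It cites \cite[\S5]{GO26}, where the equality is derived from Bourgain's discretization theorem. That theorem is a Lipschitz-analytic, differentiation-type result: a bi-Lipschitz embedding of a sufficiently fine piece of the unit ball of a finite-dimensional normed space yields an embedding of the whole space, hence a linear one, with essentially the same distortion. Here the relevant piece is available because a small ball of $\tc(X)$ around the uniform measure sits inside the simplex $\emd(X)$. That machinery works for an arbitrary target Banach space, and it absorbs the difficulty that $L_1$ lacks the Radon--Nikod\'ym property.

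You instead exploit a feature special to $L_1$. For each finite $F\subseteq V$, the $L_1$-embeddable semimetrics on $F$ form a closed convex cone, the cut cone. So embeddability survives your blow-up limit and also averaging by invariant means over the amenable groups $(V,+)$ and $(\R_{>0},\cdot)$, in the spirit of Aharoni--Maurey--Mityagin. This gives an $L_1$-embeddable norm that is $D$-equivalent to $\|\cdot\|_{\tc}$, without differentiating anything. You then outsource the linearization to two standard facts:
\begin{itemize}
\item Bretagnolle--Dacunha-Castelle--Krivine: a norm whose metric is of negative type comes from a subspace of $L_1$;
\item every finite-dimensional subspace of $L_1$ embeds into some $\ell_1^m$ with distortion $1+\varepsilon$.
\end{itemize}
Your route is more elementary on the analytic side but is tied to targets where embeddability is a closed convex condition on metrics. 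The paper's route is a heavier black box that works for any target.

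One presentational point. Averaging ``translated cut measures $\omega(\cdot+z)$'' is not literally well defined when $\omega$ is a cut measure on a fixed finite set $Y$, because translation moves $Y$. State Step 2 at the level of semimetrics instead. Let $\bar\rho(x,y)$ be the invariant mean of $z\mapsto\rho(x+z,y+z)$. For each finite $F$, the restriction $\bar\rho|_F$ lies in the closed convex hull of the restrictions of the translates, hence in the cut cone of $F$. This is exactly what your remark that the bounds are ``linear in $\omega$'' amounts to.

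Step 3 only uses negative type, which is a condition on finite subsets. So you never need to realize the limiting semimetric on all of $V$ inside a single $L_1$-space, and the ultrapower discussion in Step 1 can be dropped in favor of the pointwise ultralimit of the rescaled semimetrics.
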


This result is a consequence of Bourgain's discretization theorem  \cite{Bou87}, see \cite[\S5]{GO26} for details.

We also need the following fact. Suppose that $X$ is a finite metric space, $\lambda: \TC(X) \to \R$ is a linear functional, and $x_0\in X$ is a basepoint. Let $f(x) = \lambda(\delta_x-\delta_{x_0})$. Then, since $\{\delta_x-\delta_{x_0}\}_{x\in X}$ spans $\TC(X)$, for any $\mu \in \TC(X)$, we have $\lambda(\mu)=\int fd\mu$.

The next theorem establishes Theorem~\ref{thm:mainthm} in the case $d \geq 3$.

\begin{theorem} \label{thm:L1distortion}
For $n,d\in\N$ with $d \geq 3$, there exists a constant $C < \infty$ (independent of $n,d$) such that
$c_1(\TC([2^n]^d)) \geq C^{-1}(n-23)d$.
\end{theorem}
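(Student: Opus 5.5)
By Theorem~\ref{T:lin&EMD}, it suffices to bound from below the distortion of a linear map $T:\TC([2^n]^d)\to\ell_1^M$. Rescale $T$ so that $\|\mu\|_{\TC}\le\|T\mu\|_1\le D\|\mu\|_{\TC}$ for all $\mu$, and write $T\mu=(\lambda_j(\mu))_{j=1}^M$. By the fact recorded just before the statement, there are functions $f_j:[2^n]^d\to\R$ with $\lambda_j(\mu)=\int f_j\,\d\mu$. The upper bound for $T$ says that $\sum_j |f_j(u)-f_j(v)|\le D$ for every edge $\{u,v\}$, because $\|\delta_u-\delta_v\|_{\TC}=1$. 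Averaging over edges gives
$$\sum_j \|f_j\|_{W^{1,1}}\le 2^n D.$$

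Apply the lower bound for $T$ to the random measures $\nu_k$, take expectations, and sum over $24\le k\le n$. Theorem~\ref{thm:TCnorm} gives
$$\sum_{k=24}^n\E\|\nu_k\|_{\TC}\ge \tfrac{7}{48}(n-23)d2^n.$$
On the other side,
$$\sum_{k=24}^n\E\|T\nu_k\|_1=\sum_j\sum_{k=24}^n\E\Big|\int f_j\,\d\nu_k\Big|.$$
Since every summand is nonnegative, the inner sum is at most the full sum starting at $k=8$. By Theorem~\ref{thm:Sobolev}, that full sum is at most $C\|f_j\|_{W^{1,1}}$.

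Combining these bounds gives
$$\tfrac{7}{48}(n-23)d2^n\le C\sum_j\|f_j\|_{W^{1,1}}\le C2^nD.$$
Hence $D\ge \frac{7}{48C}(n-23)d$. Taking the infimum over all such $T$ yields $c_{1,\rm lin}(\TC([2^n]^d))\gtrsim (n-23)d$, and then Theorem~\ref{T:lin&EMD} transfers this bound to $c_1$. After renaming the constant, this is the claim. If $n<24$, the statement is vacuous and there is nothing to prove.

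The only delicate step is the normalization. One must check that the $W^{1,1}$ norm is scaled by $2^n$ and averaged over edges, so that the edge bound $\sum_j|f_j(u)-f_j(v)|\le D$ gives exactly $2^nD$. This is what cancels the factor $2^n$ in Theorem~\ref{thm:TCnorm}.
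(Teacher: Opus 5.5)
Your proof is correct and follows the paper's argument essentially step for step. You linearize via Theorem~\ref{T:lin&EMD}, represent the coordinates of the noncontractive map by functions $f_j$, bound $\sum_j\|f_j\|_{W^{1,1}}$ by $2^n$ times the operator norm using $\|\delta_u-\delta_v\|_{\TC}=1$ on edges, and play Theorem~\ref{thm:Sobolev} (restricted to $k\ge 24$) against the lower bound of Theorem~\ref{thm:TCnorm}, just as the paper does; your remark that the case $n\le 23$ is trivial (the right-hand side is nonpositive) is also fine.
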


\begin{proof}
By Theorem~\ref{T:lin&EMD}, it suffices to prove that $c_{1,\rm lin}(\TC([2^n]^d)) \geq C^{-1}(n-23)d$. Let $L: \TC([2^n]^d): \to \ell_1^m$ be an arbitrary noncontractive linear operator into a finite-dimensional $\ell_1$-space. Choose functions $\{f_i: [2^n]^d \to \R\}_{i=1}^m$ such that, for all $\mu \in \TC([2^n]^d)$, $\int f_i \d\mu = L(\mu)_i$ for all $i\in\{1,\dots m\}$. Set $p := 2^{-4d}$. Then, applying Theorem~\ref{thm:Sobolev} to each $f_i$, we get
\begin{equation*}
    \sum_{k=24}^n \E\left|L(\nu_k)_i\right| \leq \frac{C}{|E([2^n]^d)|}\sum_{\{u,v\}\in E([2^n]^d)}2^n|L(\delta_v-\delta_u)_i|.
\end{equation*}
Summing over $i \in \{1,\dots m\}$ yields
\begin{equation*}
    \sum_{k=24}^n \E\left\|L(\nu_k)\right\|_1 \leq \frac{C}{|E([2^n]^d)|}\sum_{\{u,v\}\in E([2^n]^d)}2^n\|L(\delta_v-\delta_u)\|_1.
\end{equation*}
Since $\|\delta_v-\delta_u\|_{\TC} = 1$, this yields
\begin{equation*}
    \sum_{k=24}^n 2^{-n}\E\left\|L(\nu_k)\right\|_1 \leq C\|L\|_{op},
\end{equation*}
where $\|L\|_{op}$ denotes the operator norm. Then since $L$ is noncontractive, this implies
\begin{equation*}
    \sum_{k=24}^n 2^{-n}\E\left\|\nu_k\right\|_{\TC} \leq C\|L\|_{op}.
\end{equation*}
By Theorem~\ref{thm:TCnorm}, the left-hand side is lower bounded by $(n-23)\frac{7d}{48}$, completing the proof.
\end{proof}

\section{Acknowledgments}
We would like to express our gratitude to Assaf Naor, who suggested this problem to us and organized a meeting on it during the conference ``Metric Embeddings" at the American Institute of Mathematics (July 2025). We also thank all participants of that meeting.

\end{document}